\begin{document}

\newtheorem{thm}{Theorem}[section]
\newtheorem{cor}[thm]{Corollary}
\newtheorem{prop}[thm]{Proposition}
\newtheorem{lem}[thm]{Lemma}
%
\theoremstyle{definition}
\newtheorem{rem}[thm]{Remark}
\newtheorem{defn}[thm]{Definition}
\newtheorem{note}[thm]{Note}
\newtheorem{eg}[thm]{Example}
\newcommand{\Prf}{\noindent\textbf{Proof.\ }}
\newcommand{\bx}{\hfill$\blacksquare$\medbreak}
\newcommand{\upbx}{\vspace{-2.5\baselineskip}\newline\hbox{}%
\hfill$\blacksquare$\newline\medbreak}
\newcommand{\eqbx}[1]{\medbreak\hfill\(\displaystyle #1\)\bx}

\newcommand{\FFock}{\mathcal{F}}
\newcommand{\kil}{\mathsf{k}}
\newcommand{\Hil}{\mathsf{H}}
\newcommand{\hil}{\mathsf{h}}
\newcommand{\Kil}{\mathsf{K}}
\newcommand{\Real}{\mathbb{R}}
\newcommand{\Rplus}{\Real_+}

%

\newcommand{\bC}{{\mathbb{C}}}
\newcommand{\bD}{{\mathbb{D}}}
\newcommand{\bN}{{\mathbb{N}}}
\newcommand{\bQ}{{\mathbb{Q}}}
\newcommand{\bR}{{\mathbb{R}}}
\newcommand{\bT}{{\mathbb{T}}}
\newcommand{\bX}{{\mathbb{X}}}
\newcommand{\bZ}{{\mathbb{Z}}}
\newcommand{\bH}{{\mathbb{H}}}
\newcommand{\BH}{{\B(\H)}}
\newcommand{\bsl}{\setminus}
\newcommand{\ca}{\mathrm{C}^*}
\newcommand{\cstar}{\mathrm{C}^*}
\newcommand{\cenv}{\mathrm{C}^*_{\text{env}}}
\newcommand{\rip}{\rangle}
\newcommand{\ol}{\overline}
\newcommand{\td}{\widetilde}
\newcommand{\wh}{\widehat}
\newcommand{\sot}{\textsc{sot}}
\newcommand{\wot}{\textsc{wot}}
\newcommand{\wotclos}[1]{\ol{#1}^{\textsc{wot}}}
 \newcommand{\A}{{\mathcal{A}}}
 \newcommand{\B}{{\mathcal{B}}}
 \newcommand{\C}{{\mathcal{C}}}
 \newcommand{\D}{{\mathcal{D}}}
 \newcommand{\E}{{\mathcal{E}}}
 \newcommand{\F}{{\mathcal{F}}}
 \newcommand{\G}{{\mathcal{G}}}
\renewcommand{\H}{{\mathcal{H}}}
 \newcommand{\I}{{\mathcal{I}}}
 \newcommand{\J}{{\mathcal{J}}}
 \newcommand{\K}{{\mathcal{K}}}
\renewcommand{\L}{{\mathcal{L}}}
 \newcommand{\M}{{\mathcal{M}}}
 \newcommand{\N}{{\mathcal{N}}}
\renewcommand{\O}{{\mathcal{O}}}
\renewcommand{\P}{{\mathcal{P}}}
 \newcommand{\Q}{{\mathcal{Q}}}
 \newcommand{\R}{{\mathcal{R}}}
\renewcommand{\S}{{\mathcal{S}}}
 \newcommand{\T}{{\mathcal{T}}}
 \newcommand{\U}{{\mathcal{U}}}
 \newcommand{\V}{{\mathcal{V}}}
 \newcommand{\W}{{\mathcal{W}}}
 \newcommand{\X}{{\mathcal{X}}}
 \newcommand{\Y}{{\mathcal{Y}}}
 \newcommand{\Z}{{\mathcal{Z}}}

\newcommand{\sgn}{\operatorname{sgn}}
\newcommand{\rank}{\operatorname{rank}}

\newcommand{\Isom}{\operatorname{Isom}}

\newcommand{\qIsom}{\operatorname{q-Isom}}

\newcommand{\sIsom}{\operatorname{s-Isom}}

\newcommand{\sep}{\operatorname{sep}}




 \title[Nets and Meshes]{String-node nets and  meshes}

\author[S.C. Power and B. Schulze]{S.C. Power and B. Schulze}
\address{Dept.\ Math.\ Stats.\\ Lancaster University\\
Lancaster LA1 4YF \\U.K. }
\email{b.schulze@lancaster.ac.uk}
\email{s.power@lancaster.ac.uk}


\thanks{SCP supported by EPSRC grant  EP/J008648/1.}
\thanks{BS supported by EPSRC grant  EP/M013642/1.}
\thanks{2010 {\it  Mathematics Subject Classification. 52C25, 52C22, 74N05 }}
\thanks{Key words and phrases: periodic net, string-node mesh, rigidity, flexibility,  Sierpinski mesh}

\begin{abstract}
New classes of infinite bond-node  structures are introduced, namely \emph{string-node nets} and \emph{meshes},  
a mesh being a  string-node net for which the nodes are dense in the strings. Various construction schemes are given including the minimal extension of a (countable) line segment net  by a countable scaling group. A \emph{linear mesh}  has strings that are straight lines and nodes given by the intersection points of these lines. Classes of meshes, such as the \emph{regular  meshes} in $\bR^2$ and $\bR^3$, are defined and classified. 
String-length preserving motions  are also determined for a number of fundamental examples and contrasting flexing and rigidity properties are obtained with respect to noncrossing motions in the space of \emph{smooth meshes}.
\end{abstract}

\date{}
\maketitle

\section{introduction}\label{s:intro}
A  {bar-joint framework} is an abstract structure in a Euclidean space composed of rigid bars which are connected at joints where there is free articulation (Asimow and Roth \cite{asi-rot},\cite{asi-rot-2}, Whiteley \cite{whi-handbook}). A  {tensegrity framework}
is a similar structure in which there are also cables, or strings, (and sometimes struts) which constrain the motion of the joints (Gruenbaum and  Shephard  \cite{gru-lostlectures}, Roth and Whiteley \cite{rot-whi}, Snelson \cite{sne}).
In what follows we move in a more mathematical direction and introduce new classes of distance-constrained  structures, namely  {string-node nets} and  {meshes}. By a mesh we mean an infinite string-node net for which the nodes are dense in the strings, which in turn may possibly be dense in the ambient space. 
A planar mesh could be likened to a fabric or material, made of infinitesimally thin threads, except that the threads are joined pointwise rather than being interlaced, and the join points are dense on each thread. The primary constraint for flexes and motions is that the internodal string lengths must be preserved.

In fact in Sections 2, 3 and 4 we shall be concerned solely with string-node nets and meshes as \emph{static} geometrical constructs. 
These structures are particularly interesting in the periodic case since their discrete counterparts provide the fundamental bond-node nets in crystallography and in reticular chemistry. In this connection Delgado-Friedrichs,  O'Keeffe and  Yaghi \cite{del-et-al-1} have observed that there are exactly $5$ regular discrete periodic nets in three dimensions. The subsequent papers \cite{del-et-al-2}, \cite{del-et-al-2.5}, \cite{del-et-al-3}  examine semiregular nets, quasi-regular nets and binodal networks, amongst other aspects, and indicate their appearance in material crystals. 
The $5$-fold classification of regular $3$-periodic discrete bond-node nets  is not so well-known in mathematical circles  and, moreover, a simple formal self-contained proof  does not seem to be  ready-to-hand, and so
we provide one here.
In particular we give an elementary direct construction of the most intriguing example, namely the $K_4$-crystal net (Coxeter \cite{cox-laves}, Hyde et al \cite{hyd-et-al}, Sunada \cite{sun-notices}).
We make use of the discrete classification to obtain, in Theorem \ref{t:regularmesh}, a classification of  regular meshes in dimensions $2$ and $3$ up to conformal affine isomorphism. We also indicate and comment on the correspondence between our constructs, notation and terminology  and that used by reticular chemists and a summary table of this correspondence appears at the end of Section 3.

The characterisation of regular linear meshes shows that they are parametrised by certain pairs $\N, F$ where $\N$ is a regular discrete string-node net and $F$ is a dense countable abelian subgroup of $\bR$. Moreover we show that certain \emph{strongly regular} meshes correspond to $F$ being a countable subfield. Motivated by this relationship  we also show, in Theorem \ref{thm:hull}, how one may  construct a \emph{minimal extension mesh} of any discrete line segment net by any countable field.

The regular and strongly regular linear meshes provide the most basic meshes in two and three dimensions where the space group acts with full transitivity. As we indicate in Section 3 it will be of interest to enlarge the classifications here to meshes with weaker forms of transitivity.

In Section 5 we turn to some fundamental  \emph{dynamical} considerations and initiate  a theory of rigidity and flexibility for string-node meshes.
We focus on continuous motions that preserve internodal string lengths and which satisfy a natural noncrossing condition, so that each motion is in fact a path of injective placements.
It is shown that for a square grid mesh in the unit square such motions, if smooth, necessarily have a simple \emph{laminar form} for some small finite time interval.  On the other hand we  show that the triadic kagome  mesh is rigid for continuously differentiable motions. This contrasts with the many ways in which the discrete kagome bar-joint framework is periodically flexible. We also construct highly flexible line segment meshes which, unlike the square grid meshes, admit localised finite motions. In this case, to pursue the allusion above, the motions are more akin to that of a planar fibred liquid, or liquid crystal, rather than that of an idealised fibred material.

We note that there are some similar aspects of rigidity and flexibility in  the analysis of ''continuous tensegrities" considered by Ashton \cite{ash}, and in the very recent analysis of semidiscrete surfaces in Karpenkov \cite{kar}.
Nevertheless meshes are in a quite different category, having no rigid basic components, and analysing their string-length preserving motions require new methods. 

In the final section we indicate some further directions and problem areas, including two problems which we paraphrase here as follows.
\medskip

1. In three dimensions the noncrossing motion of a grid mesh cube need not have a strictly laminar form for small time values, in the sense of being axially determined. Classify these small time motions. 
\medskip

2. The line segment mesh associated with the Sierpinski triangle is a limit of minimally rigid bar-joint frameworks. Determine if this string-node mesh is rigid with respect to noncrossing continuous string-length-preserving motions.
\medskip

We would like to thank Davide Proserpio and Egon Schulte for alerting us to articles in reticular chemistry, to the Reticular Chemistry Structure Resource, and to articles on the classification of discrete periodic nets.

\section{Nets and meshes: definitions}\label{s:linear}
We first define meshes and string-node nets  in the case of strings formed by lines or closed line segments. 

\begin{defn} (i)
A \emph{string-node net} $\N$ in the Euclidean space $\bR^d$ is a pair  $(N, S)$ of sets, whose respective elements are the nodes and strings of $\N$, with the following two properties.

(a) $S$ is a nonempty finite or countable set whose elements are lines, closed line segments or closed semi-infinite line segments in $\bR^d$, such that  collinear strings are disjoint.

(b) $N$ is a nonempty finite or countable set of points in $\bR^d$ given by the intersection points of strings. 

(ii) A \emph{mesh} is a string-node net such that for each $l\in S$ the set $l\cap N$ of nodes in $l$ is dense in $l$.

(iii) A net (resp. mesh) is a \emph{linear net} (resp. \emph{linear mesh}) if the strings are lines.
\end{defn}

We also refer to such a string-node net or mesh in a more precise manner as a \emph{line segment net} or  \emph{line segment mesh}, particularly when we consider more general \emph{smooth nets} and \emph{smooth meshes} for which the strings are curves. Also, when there is no conflict we simply refer to a line-segment string-node net, as defined above, as a net. It is these nets that concern us in Sections 2 to 4, whereas in Section 5 we consider motions in the space of smooth meshes.

In Section \ref{s:chemistry} we comment in some detail on the  constructs and terminology used here and those used by Delgado-Friedrichs et al and by Sunada. In particular a "$3$-periodic net" in reticular chemistry generally refers to the underlying translationally periodic graph of a material while at the same time certain notions (such as regularity) derive from an evident {maximum symmetry realisation}.

We define the \emph{body} $|\N|$ of a net  $\N$ to be the union of its strings and it is elementary that  there is a one-to-one correspondence $\N\to |\N|$ between the set of nets in a particular dimension and the set of their bodies. 

We remark that, although we do not do so here, in the case of the plane it is possible to consider more general nets and meshes in which strings may cross over. More precisely, strings may share points which are not nodes and there is a specification of the crossover choice, as in a knot diagram. Improper meshes of this form arise naturally, for example, from folding and pleating moves. 

Recall that the Archimedean, or semiregular, tilings of the plane are the tilings of the entire plane by regular polygons, with pairwise edge-to-edge connnections,  such that the isometric symmetries of the tiling act transitively on the vertices.
There are $11$ such tilings, $3$ of which are linear and so determine the bodies of $3$ linear string-node nets. We denote these linear nets as 
(i) $\N_{\rm tri}$, in the case of the regular triangle tiling, 
(ii) $\N_{\bZ^2}$, for the square tiling, and (iii) $\N_{\rm kag}$ for the kagome  tiling, by regular triangles and hexagons.
These are  \emph{discrete} nets in the sense that there is a positive  lower bound to the distances between nodes.

The three  semiregular
linear nets have the following scaling-inclusion properties. 
To formulate this assume that these linear nets are in a \emph{standard position} by which we mean that the $x$-axis is a string and that there are nodes at $(0,0)$ and $(1,0)$, with no intermediate nodes. It follows that
\medskip

(i) $|\N_{\rm tri}|\subseteq \frac{1}{m}|\N_{\rm tri}|$, for any positive integer $m$,
\medskip

(ii)   $|\N_{\bZ^2}|\subseteq \frac{1}{m}|\N_{\bZ^2}|$, for any positive integer $m$,
 \medskip
 
(iii)
$|\N_{\rm kag}|\subseteq \frac{1}{m}|\N_{\rm kag}|$, for any odd integer $m$.
\medskip

To see that (iii) holds it is enough to show that if $g$ is a string of $\N_{\rm kag}$ and $m$ is odd then the line $mg$ is a string. The strings with negative slope have intercepts with the $x$-axis that are odd integers, and the strings with positive slope have even integer intercepts, so these cases follow. The third case follows similarly by considering intercepts.

\medskip

\begin{center}
\begin{figure}[ht]
\centering
\includegraphics[width=6cm]{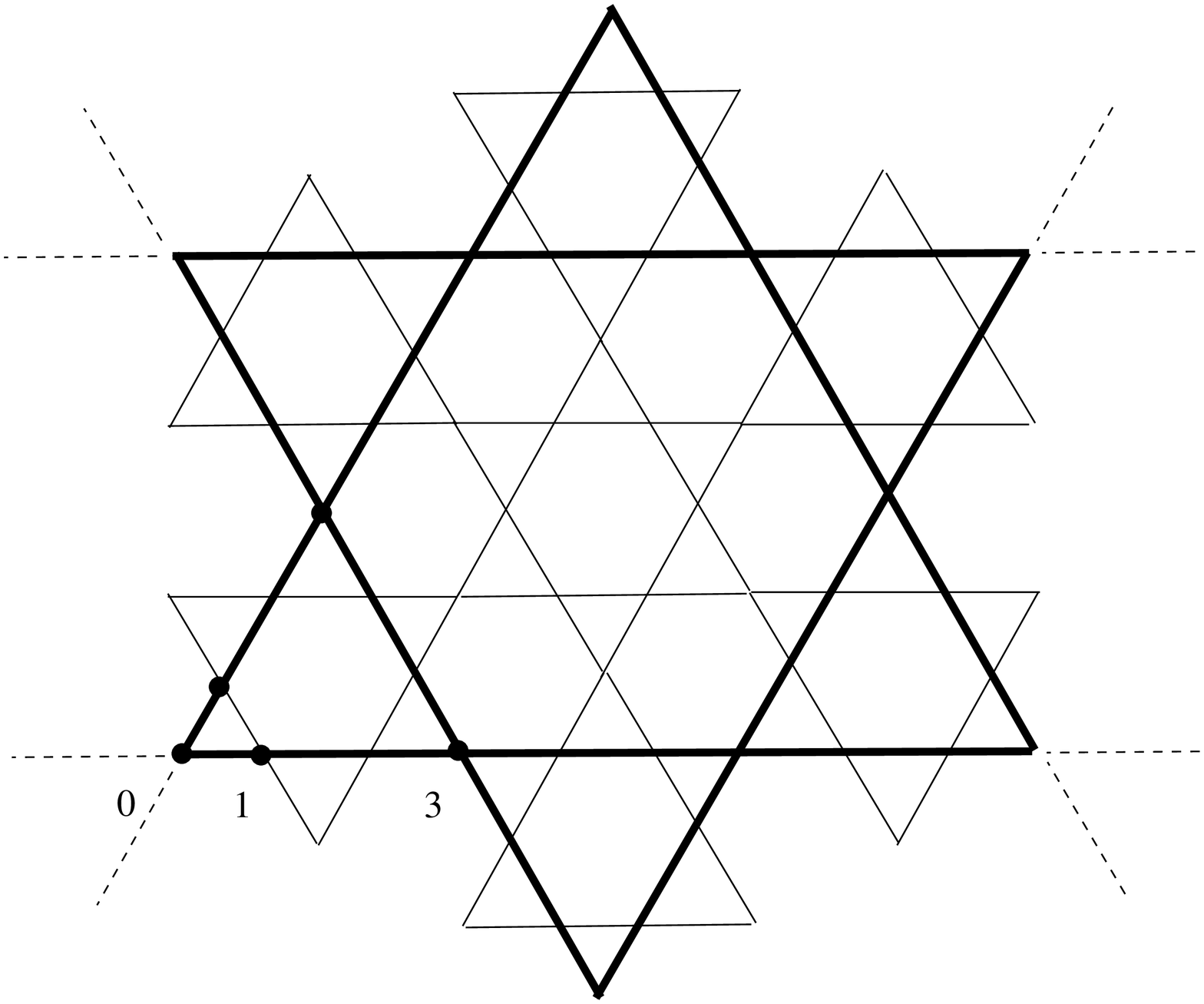}
\caption{The inclusion $3|\N_{\rm kag}|\subseteq |\N_{\rm kag}|$.}
\end{figure}
\end{center}

We define the  \textit{triadic kagome mesh} to be the linear mesh $\M_{\rm kag}$  whose body
is the union
\[
\bigcup_{n=1}^\infty \frac{1}{3^n}|\N_{\rm kag}|.
\]
This mesh is a \emph{dense mesh}, by which we mean that the nodes  are dense in the ambient space. Evidently the triadic kagome mesh is a \emph{divisible mesh} in the sense that $|\M| = a|\M|$ for some scaling factor $0<a< 1$. 

One can similarly define {dense meshes} by taking the union of scalings of
$\N_{\rm tri}$ or $\N_{\bZ^2}$. 
These examples can be considered as regular meshes, in analogy with regular tilings of the plane, which only allow a single type of regular polygon tile. These construction are possible also in three dimensions, and we  formalise a precise notion of a regular mesh in two or three dimensions in Definition \ref{d:regularlinearmesh}.

We now give some terminology related to the local structure of  nets and meshes.  

\begin{defn}\label{d:vertexfigdilngroup}
Let $\N= (N,S)$ be a (line-segment) string-node net in $\bR^d$ and let  $B(p,r)$ be the closed unit ball with radius $r$ centred at the node $p\in N$.


(i) The \emph{ray figure} $X_r(p)$ of a node $p$ is the subset of the body $|\N|$ which is the union of the line segments $l\cap B(p,r)$ with $p\in l \in S$.

(ii) The \emph{degree} or \emph{valency} of $p$, is the number of path-connected components of $X_r(p)\backslash \{p\}$, and so is a positive integer or $\infty$.

\end{defn}

We also define the \emph{vertex figure} $F(p)$ (or \emph{coordination figure})
of a node $p$ of a net $\N$ to be the closed convex hull of $rX_r(p)$ for suitably small $r$. This is well-defined for  the regular meshes defined below and agrees with the usual terminology for skew polyhedra (Coxeter \cite{cox-laves}) and for the realised periodic nets in reticular chemistry.

The next definition concerns global symmetries. 

\begin{defn}\label{d:spacegroup}
Let $\N=(N,S)$ be a  (line-segment) string-node net in $\bR^d$. For $r>0$ let $D_r$ be the scaling map on $\bR^d$ with $D_r(x) = rx$ .

(i) The \emph{space group} of $\N$ is the group $\Isom(\N)$  of isometries $T$ 
of $\bR^d$ which determine bijections $T:N \to N$ and $T:S \to S$.

(ii) The \emph{scaled space group} $\sIsom(\N)$ is the group of  maps $D_rT$, with $r>0$ and $T$ in $\Isom(\bR^d)$,  whose restriction to $N$ and to $S$ determine bijections $N \to N$ and $S \to S$.

(iii) $\N$ is \emph{periodic} if the space group contains a lattice, that is, a full rank discrete translation subgroup, determined by $d$ linearly independent period vectors.
\end{defn}


Note that the scaled isometries of $\bR^d$ coincide with the affine automorphisms of $\bR^d$ which preserve angles and so we may refer to the scaled space group as the \emph{conformal space group}.
We say that two string-node nets $\N_1=(N_1, S_1)$ and $ \N_2=(N_2,S_2)$ are  \emph{congruent} (resp.  \emph{conformally isomorphic}) if there exists an isometry $T$ (resp. scaled isometry 
$D_rT$) whose restriction to $N_1$ and $S_1$ determine bijections
$N_1 \to N_2$ and $S_1 \to S_2$.

We note some simple examples of meshes.

For countable dense sets $E_1, E_2, \dots , E_d$ in $\bR$  define $\M_{\rm grid}(E_1, E_2, \dots , E_d)$  to be  the \emph{general grid mesh} in $\bR^d$ whose body is the union of the strings with equations $x_i=a_i$, for $a_i\in E_i$.
If $d=2$ and both sets  are aperiodic then evidently 
 $\M_{\rm grid}(E_1, E_2)$ is a planar mesh with no nontrivial translational symmetries. On the other hand if $E_1=E_2$ is a countable abelian group $F$ then the associated planar mesh is rich in translational symmetries. We write this mesh as $\N_{\bZ^2} \otimes F$ and note that it is a regular mesh in the sense below.

For a contrasting example define the \emph{rational mesh} $\M(\bQ^2)$ as the linear dense mesh in $\bR^2$ whose body is the union of the lines which are \emph{rational} in the sense that
they pass through two (and hence infinitely many) points with rational coordinates. Note that  the set of nodes is $\bQ^2$ and each node has infinite degree. 

\subsection{Regular nets} The next definition  provides a formal definition of a  \emph{regular string-node net},  in two or three dimensions, and it is terminologically convenient for us to require a regular string-node net to be a discrete net. The sense of the term "regular" used here derives from the notion of a \emph{regular $3$-periodic net} considered
in reticular chemistry. In those considerations the vertex figure, or figures, of such  nets  appear prominently in various classification schemes. See, in particular,
Delgado-Friedrichs and O'Keeffe \cite{del-oke},  Delgado-Friedrichs,  O'Keeffe and  Yaghi \cite{del-et-al-1}, \cite{del-et-al-2},
Delgado Friedrichs et al \cite{del-et-al-3} and our comments in Section \ref{s:chemistry} below.

Recall that a countable subset of $\bR^d$ is said to be \emph{relatively dense} if there is a radius $r$ such that each point of $\bR^d$ lies in a closed ball $B(p,r)$ of radius $r$ centred at $p$, for some point $p$ in the set.

\begin{defn}\label{d:regulardiscretenet}Let $d=2$ or $3$. A \emph{regular string-node net} 
$\N$ in $\bR^d$ is a connected discrete {periodic} line segment net in $\bR^d$ with the following properties.

(i) $\N$ is \emph{transitive}, in the sense that for every pair of nodes $p_1, p_2$ there exists
$T\in \Isom(\N)$ with $T(p_1)=p_2$.

(ii) The vertex figure $F(p)$ of any node is a regular polygon or a regular polyhedron centred at $p$.

(iii) For some (and hence every) node $p$  the rotational isometries of $\bR^d$ which fix $p$ and induce symmetries of $F(p)$ are contained  in the space group of $\N$.

(iv) The nodes are \emph{relatively dense} in $\bR^d$.
\end{defn}
 
Note that  condition (ii) allows the possibility of a regular polygon vertex figure in three dimensions, rather than requiring a regular polyhedron vertex figure. Condition (iv) is in fact redundant, being a consequence of our full rank notion of periodic, but we include it to maintain a parallel with the definition of a regular mesh below.

In two dimensions it is readily proven that there are $3$ regular nets up to conformal isomorphism, namely $\N_{\rm tri}$ and  $\N_{\bZ^2}$, which are linear, and the net  which derives from the hexagonal honeycomb tiling which we denote as $\N_{\rm hex}$.
In three dimensions, much less evidently, there are $5$ regular nets, identified by Delgado-Friedrichs,  O'Keeffe and  Yaghi in \cite{del-et-al-1}. 
We give a formal statement and proof of this fact in Theorem \ref{t:regularnets}.

\medskip


\subsection{Regular meshes}In the following definition of a \emph{regular linear mesh} we have the conditions corresponding to the requirements (i), (ii), (iii) of a regular net, while the relative density condition (iv) is replaced by density.

\begin{defn}\label{d:regularlinearmesh}
Let $\M=(N,S)$ be a {periodic} (line-segment) mesh in $\bR^d$ for $d=2,3$. Then $\M$ is  \emph{regular}  if the following properties hold.

(i) $\M$ is \emph{transitive}, in the sense that for every pair of nodes $p_1, p_2$ there exists
$T\in \Isom(\M)$ with $T(p_1)=p_2$.

(ii) The convex hull of each ray figure $X_r(p)$ is a regular polygon or a regular polyhedron centred at $p$.

(iii) For some (and hence every) node $p$  the rotational isometries of $\bR^d$ which fix $p$ and induce symmetries of $X_r(p)$ are contained  in the space group of $\M$.

(iv) The nodes are dense in $\bR^d$.
\end{defn}

Note that for a regular mesh the isometries that fix a given node are finite in number. It follows readily from this and the transitivity and density properties that a regular line-segment mesh is necessarily a linear mesh.

The transitivity property (i)  is a natural form of homogeneity making the position of each node in the net of equal status. For regular nets meshes it is also natural to consider the following stronger form of homogeneity.

\begin{defn}Let $\N$ be a string-node  net in $\bR^d$.

(i) $\N$ is \emph{strongly transitive} if for every pair of pairs of nodes,  $\{p_1, p_1'\}, \{p_2, p_2'\}$, with each pair lying on the same string and with $p_1\neq p_2$ and $p_1'\neq p_2'$ there exists a scaled isometry
$T$ in $\sIsom(\M)$ with $T(p_1)=p_2$ and $T(p_1')=p_2'$.

(ii) $\N$ is a \emph{strongly regular linear mesh} if it is a regular mesh and is strongly transitive.
\end{defn}

The following scaling group invariant in the case of meshes in standard position, plays a useful role in various constructions and is nontrivial for strongly transitive meshes. On the other hand this group is trivial for any discrete net and is trivial  for the grid mesh $\M_{grid}(E,E)$ where $E$ is the additive group of rationals $a/b$
where $b$ is any product of distinct primes.

\begin{defn}
 The \emph{scaling group}, or dilation group,  of a net $\N$, in $\bR^d$, with a node at the origin, is the group $D(\N)$ of positive real numbers $r$ for which $r|\N|  = |\N|$, with multiplicative group product. 
\end{defn}

\subsection{Smooth meshes}We recall that a continuous flex of a bar-joint framework $(G,p)$, with placement vector $p \in \bR^{d|V|}$,  is a continuous map $p: [0,1] \to \bR^{d|V|}$ for which the
frameworks $(G,p(t))$ are equivalent to $(G,p(0))$, where $p(0)=p$.
We consider later how a linear mesh or a line segment mesh may similarly flex through a set of more general meshes, subject to the preservation of internodal distances on strings.
The string-node meshes that play this role may be defined as follows.

\begin{defn}
A \textit{$C_k$-smooth string} $L$  in $\bR^d$ is a closed set which is the range of a $k$-fold continuously differentiable injective map $\gamma :I\to \bR^d$, where $I$ is one of the intervals $[0,1], [0,\infty),$ or
$(-\infty, \infty)$.
\end{defn}

\begin{defn}\label{d:net} A \emph{$C_k$-smooth string-node net} in $\bR^d$ 
is a pair $\N=(N, S)$ where $S$ is a countable set of $C_k$-smooth strings $l$ in $\bR^d$ whose pairwise intersections are finite in number and where $N$ is the set of points which lie in two or more strings, and is a nonempty set.
\end{defn}

\begin{defn}
A \emph{$C_k$-smooth mesh} 
is a $C_k$-smooth string-node net $\M=(N, S)$ in $\bR^d$ whose set of nodes, $N$, is dense in the body of $\M$. 
\end{defn}

For a $C_1$-smooth string one may define the length between any two of its points. When these lengths are finite the string  may be parametrised by its oriented length from some initial point $x$. Thus, in this case there exists
a reparametrisation $\gamma :I\to \bR^d$ so that the length of the simple path $\gamma([a,b])$ is $b-a$ for all $a\leq b$.

For a simple  example of a smooth mesh (that is, a $C_\infty$-smooth mesh) consider the two-dimensional \textit{radial mesh} $\M_{\rm radial}$
whose set of strings consists 
of concentric circles about the origin with rational radii
together with the straight lines through the origin whose angles
with the $x$-axis are rational multiples of $\pi$.
Note that the central node has infinite degree while all other nodes have degree $4$. 

\section{Regular discrete nets}In this section
we classify the regular nets of Definition \ref{d:regulardiscretenet} in the case of three dimensions.
\medskip

(I) There is a regular net in $\bR^3$, which we denote as $\N_{K_4}$, with vertex figure equal to an equilateral triangle. 
\medskip

Coxeter \cite{cox-PLMS} notes the early discoveries of this net by the mathematician John Petrie and by the crystallographer Fritz Laves, and refers to it as the Laves graph. There are in fact two chiral forms of the net, in the usual sense that its mirror image 
lies in a distinct rotation-translation class.
It is known by various names, associated in part with its appearance in crystal structures and we comment further on this later in this  section. In the terminology of Toshikazu Sunada it is referred to as the $K_4$-crystal, reflecting the fact that it is the so-called standard realisation of the abelian covering graph of the graph $K_4$. Details of its construction in this manner
are given in Sunada \cite{sun-notices}, \cite{sun-book}. In \cite{cox-laves} Coxeter gives a group theoretic construction of it as a graph inscribed on an infinite regular skew polyhedron.
In contrast we give a direct geometric construction below together with  an explicit inclusion sequence which helps clarify its construction, namely
\[
\N_{K_4} \subset \N_{\rm Hxg} \subset \N_{\rm Fcu}
\]
\medskip

(II) There is a regular net, denoted $\N_{\rm Scaff}$, with vertex figure equal to a square. 
\medskip

We refer to this string-node net as the scaffolding net since finite parts of it resemble a scaffolding structure. 
Figure \ref{f:scaff} indicates its construction by means of a specification of $6$ representative nodes and $6$ representative strings for the translation classes of the set of nodes and strings with respect to the periodicity vectors
$(2,0,0), (0,2,0), (0,0,2)$.
Thus it may be considered as a periodic string and node depletion of a containing grid net. The \emph{translation classes} referred to here and elsewhere are taken with respect to the periodicity vectors associated with a specified  unit cell or translation group.

\begin{center}
\begin{figure}[ht]
\centering
\includegraphics[width=5cm]{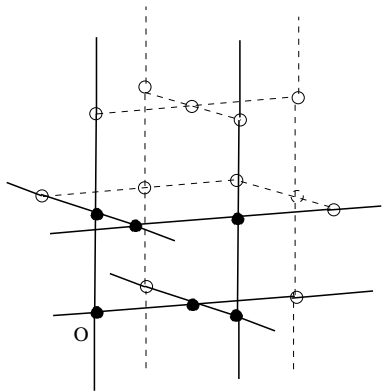}
\caption{A generating set of $6$ nodes and $6$ linear strings for  $\N_{\rm Scaff}$.}
\label{f:scaff}
\end{figure}
\end{center}
\medskip

(III) There is a regular net, denoted $\N_{\rm Dia}$, with vertex figure equal to a tetrahedron.
\medskip

This net corresponds to the well-known bond-node structure of the diamond form of carbon. 
Geometric detail for this net  can be found in \cite{pow-poly} and \cite{sun-book}, for example. 
\medskip

(IV) There is a regular net, denoted $\N_{\rm Bcu}$, with vertex figure equal to a cube.
\medskip

The set of nodes of this net may be defined as an augmentation of the set ${\bZ^3}$  by  nodes at the centre of each of the associated cubical cells. The set of strings  is the set of lines which pass through such a central node and a nearest neighbour. 
Our notation reflects the fact that this string node net is associated with the body centred cubic (bcu)  lattice in the Bravais lattice division of crystal types.
\medskip

(V) There is a regular net, denoted $\N_{\bZ^3}$, with vertex figure equal to an octahedron.
\medskip

This is the linear net $(N,S)$  with node set  $N=\bZ^3$ and with $S$ the set of lines given by integer translates of the coordinate axes.

\begin{thm}\label{t:regularnets}
In three dimensions there are five regular string-node nets up to conformal isomorphism, namely
$\N_{K_4}, \N_{\rm Scaff}, \N_{\rm Dia}, \N_{\rm Bcu}$ and $\N_{\bZ^3}$.
\end{thm}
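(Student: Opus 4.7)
The plan is to reduce the classification to a case analysis on the vertex figure $F(p)$, and then, for each admissible vertex figure, to use the constructions (I)--(V) for existence and argue uniqueness up to conformal isomorphism. The key preliminary observation is that, by condition~(iii), the rotational symmetry group of $F(p)$ is contained in the space group $\Isom(\N)$; since $\N$ is periodic with a rank-$3$ translation lattice, the classical crystallographic restriction forces every such rotation to have order in $\{1,2,3,4,6\}$.

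I would first list the regular polygons and polyhedra in $\bR^3$ compatible with this restriction. For polyhedra, the dodecahedron and icosahedron are eliminated by their $5$-fold rotations, leaving the tetrahedron, cube and octahedron. For polygons the only survivors are the equilateral triangle, square and regular hexagon. Next I would rule out the hexagon: if $F(p)$ is a hexagon then the $6$-fold rotation axis through $p$ perpendicular to the plane $P_p$ of $F(p)$ lies in the space group, and by transitivity each node $q$ carries an analogous axis perpendicular to its plane $P_q$. In any three-dimensional crystallographic point group with a $6$-fold rotation all $6$-fold axes are parallel, so $P_p$ and $P_q$ are parallel; since both contain the string from $p$ to any neighbour $q$ they must coincide, and propagation along neighbours shows that the whole connected net lies in $P_p$, contradicting the rank-$3$ periodicity.

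The remaining five vertex figures correspond exactly to the five asserted nets, and what remains is uniqueness in each case. The principle is the same throughout: once $p$ is normalised to the origin with $F(p)$ in a fixed standard position, condition~(iii) prescribes the directions of all strings at $p$; transitivity combined with the lifted rotations propagates this configuration to every node; and the local combinatorics together with full-rank periodicity determine the net up to a global scale factor. For the octahedral, cubical and tetrahedral cases this is essentially immediate and yields $\N_{\bZ^3}$, $\N_{\rm Bcu}$ and $\N_{\rm Dia}$ respectively, while the square case forces the alternating orthogonal planes displayed in Figure~\ref{f:scaff} and yields $\N_{\rm Scaff}$.

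The main obstacle is the uniqueness proof for the triangular vertex figure, which must produce $\N_{K_4}$. Here each node has only three coplanar neighbours at $120^{\circ}$, and the orientations of the vertex-figure planes at adjacent nodes are a priori free beyond containing the connecting string. I would show that $3$-periodicity, transitivity and the lifting of the $3$-fold rotations together force the chiral helicoidal arrangement captured by the inclusion sequence $\N_{K_4}\subset\N_{\rm Hxg}\subset\N_{\rm Fcu}$, most cleanly by exhibiting the $3$-colouring of strings given by the three translation classes at a node, identifying the forced closed $10$-circuits, and thereby conformally identifying $\N$ with the Laves graph up to its mirror image.
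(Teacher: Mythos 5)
Your overall strategy coincides with the paper's: invoke the crystallographic restriction to cut the possible vertex figures down to the triangle, square, hexagon, tetrahedron, cube and octahedron, then argue existence and uniqueness case by case. Your disposal of the hexagonal case, however, is genuinely different and rather cleaner. The paper handles it by explicitly constructing the hexagonally coordinated net $\N_{\rm Hxg}$ inside the face-centred cubic net $\N_{\rm Fcu}$, showing that the order-$6$ rotation of a ray figure does not extend to the whole net (so condition (iii) of Definition \ref{d:regulardiscretenet} fails), and asserting uniqueness of $\N_{\rm Hxg}$ among transitive periodic nets with hexagonal vertex figure. Your argument -- all $6$-fold axes in a crystallographic (indeed any finite) point group are parallel, the vertex-figure planes at adjacent nodes both contain the connecting string and hence coincide, so connectivity forces planarity, contradicting rank-$3$ periodicity -- is correct and avoids any appeal to uniqueness of $\N_{\rm Hxg}$. (The paper needs $\N_{\rm Hxg}$ anyway, since it is the scaffold on which $\N_{K_4}$ is built.)

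The genuine gap is the triangular case, which you yourself identify as the main obstacle and then only promise to settle. Two things are missing. First, existence: you take $\N_{K_4}$ from item (I), but in the paper that item is only substantiated inside this very proof, by the explicit construction $\N_{K_4}\subset\N_{\rm Hxg}\subset\N_{\rm Fcu}$ together with the verification that the chosen edge-triples propagate periodically and that the order-$2$ and order-$3$ rotations of a ray figure are symmetries, i.e.\ that $\N_{K_4}$ really satisfies (ii) and (iii); deferring this to the literature abandons the self-contained construction the theorem is meant to provide. Second, uniqueness: your proposed route via a $3$-colouring of the strings at a node and ``forced closed $10$-circuits'' is not carried out, and it is not clear how the $10$-circuits are forced without something like the paper's actual mechanism, namely: non-planarity gives at least two non-parallel $3$-fold axes coming from the normals of the ray figures, periodicity then places these axes along the diagonals of a cube spanned by an orthogonal triple of period vectors, whence any double ray figure of $\N$ is congruent to one of $\N_{K_4}$ and all double ray figures have the same chirality, which pins $\N$ down as $\N_{K_4}$ or its mirror image (both conformally isomorphic to $\N_{K_4}$ since reflections are allowed). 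A minor further overstatement: you call the tetrahedral case ``essentially immediate'', whereas a neighbour's tetrahedron is not determined merely by containing the reversed string direction, and the paper accordingly treats $\N_{\rm Dia}$ by the same non-trivial axis-propagation argument as $\N_{K_4}$, reserving ``elementary'' for the square, cube and octahedron cases.
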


\begin{proof}We first construct the net $\N_{K_4}$. In this argument we identify an interesting transitive connected linear periodic net, which we denote as $\N_{\rm Hxg}$, whose coordination figure at every vertex is a regular hexagon but which, nevertheless, falls short of the symmetry-rich condition (iii) in Definition \ref{d:regulardiscretenet}.

Recall the well-known crystallographic restriction for the rotational symmetries of any full rank periodic net in three dimensions. This ensures that such  symmetries have order $1, 2, 3, 4,$ or $6$. In view of this, in addition to the five vertex figure types appearing in (I), \dots , (V) (ie., regular triangle, square, regular tetrahedron, cube, regular octahedron) there is just one additional vertex figure case to be considered,  namely a regular hexagon.  

To define $\N_{\rm Hxg}$ define first the \emph{face-centred cubic net} $\N_{\rm Fcu}$ as the linear net whose strings are the straight lines which pass through the midpoints of opposite edges of cubes that appear in a regular cubical partition of $\bR^3$. (The companion net $\N_{\rm Bcu}$ has straight line  strings which pass through opposite corners of the cubes in such a division.)
The vertex figure of $\N_{\rm Fcu}$ is  a cuboctahedron, a semiregular polyhedron. 

Figure \ref{f:fcc} indicates a "unit cell" (dotted) for $\N_{\rm Fcu}$ with respect to 
the orthogonal periodicity vectors $(2,0,0), (0,2,0), (0,0,2)$, together with the following additional features.
\medskip

(a) The solid edges and the dashed edges  indicate the position of line segments which together provide $24$ representatives for the $24$ translation classes of the internodal edges of $\N_{\rm Fcu}$.
\medskip

(b) The node labelled $B$ (considered a blue node) at the centre of the cell, together with the nodes labelled $g, r, v$ (considered as coloured green, red and violet), provide $4$ representatives for the translation classes of the nodes.
\medskip

The \emph{regular hexagonally coordinated net} in three dimensions is defined to be the string-node net, $\N_{\rm Hxg}$, determined by the $12$ solid edges and their translation classes. In the terminology of \cite{pow-poly} the finite sets in (a) and (b) provide a \emph{motif} for the associated periodic bar-joint framework. (In the terminology of Sunada \cite{sun-book} the solid edges determine the vectors of a \emph{building block} for $\N_{\rm Hxg}$.)

\begin{center}
\begin{figure}[ht]
\centering
\includegraphics[width=5.5cm]{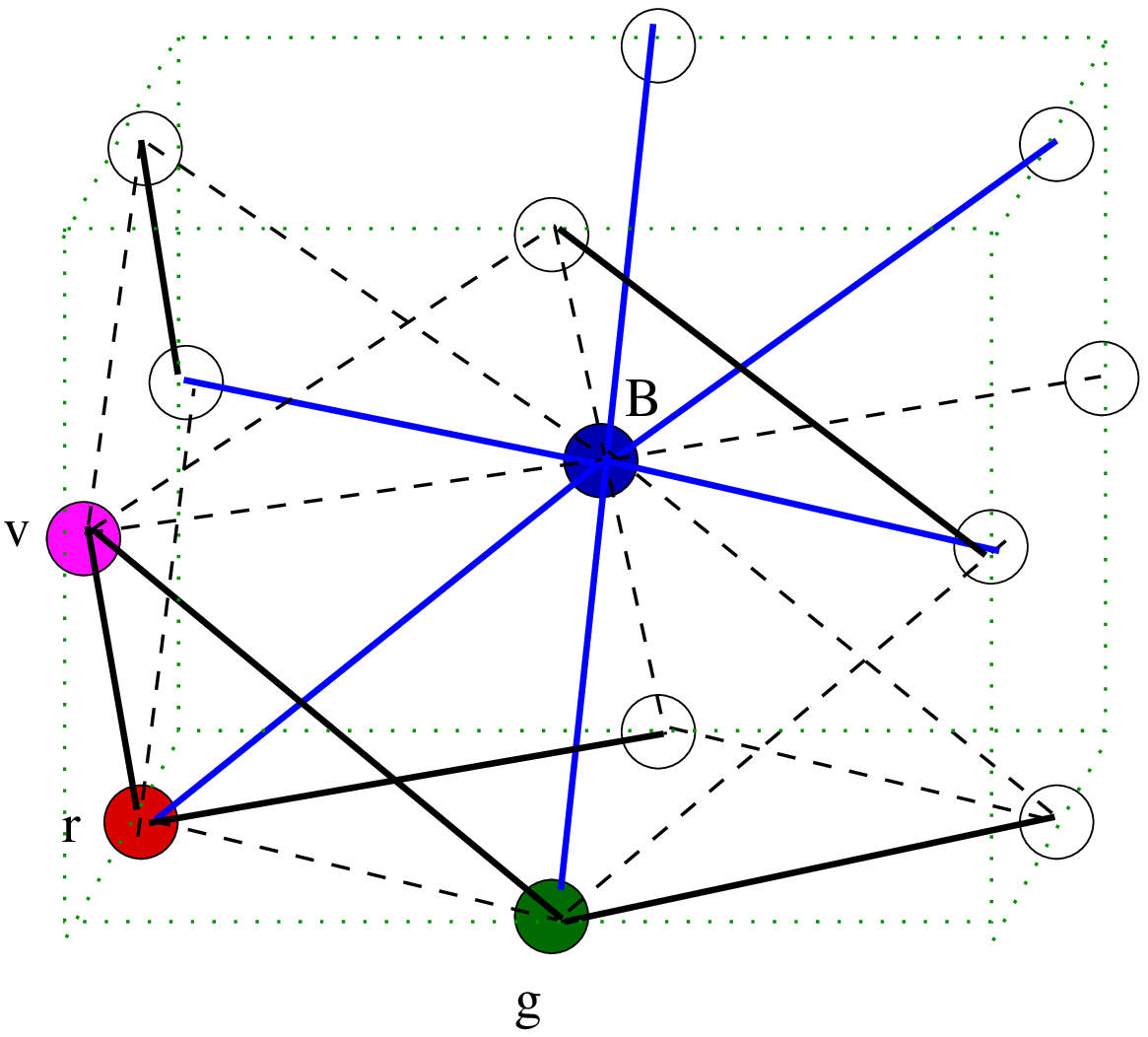}
\caption{The identification of $\N_{\rm Hxg}$ in $\N_{\rm Fcu}$.}
\label{f:fcc}
\end{figure}
\end{center}

\begin{center}
\begin{figure}[ht]
\centering
\includegraphics[width=8cm]{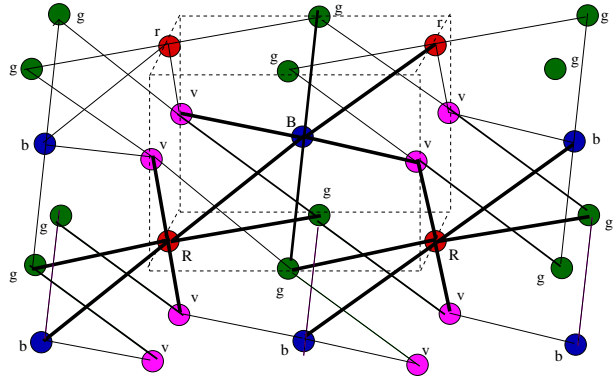}
\caption{Part of the linear net $\N_{\rm Hxg}$.}
\label{f:Hxg}
\end{figure}
\end{center}

Note that the vertex figure of the central node $B$ is  a regular hexagon whose containing plane has a normal vector $(1,-1,1)$. Moreover one can readily check that vertex figures
of the red, violet and green nodes, as nodes of $\N_{\rm Hxg}$, are also hexagons, with normal vectors $(-1,-1,1), (1,1,1)$ and $(-1,1,1)$ respectively. Thus, there are $4$ translation classes for the ray figures of $\N_{\rm Hxg}$.

The net $\N_{\rm Hxg}$ is a linear net
which we may view as a periodic linear string depletion of  $\N_{\rm Fcu}$, in analogy with the relationship between $\N_{\rm Scaff}$ and $\N_{\bZ^3}$.
Figure \ref{f:Hxg} shows a rectangular block portion of it which includes one interior blue node (labelled $B$) and two interior red nodes (labelled $R$) and their ray figures. 

Note that the ray figure of the central  $B$-labelled node in Figure \ref{f:Hxg} has a rotational symmetry of order $6$ which maps the two red nodes to the two green nodes. This symmetry does not extend to a symmetry of  $\N_{\rm Hxg}$ since the edges from  the two red nodes and violet nodes do not rotate to edges incident to the green nodes.
Thus $\N_{\rm Hxg}$ is not a regular net. (The transitivity property holds, with the equation $Tp_1=p_2$ being realised by a translation isometry, if the nodes $p_1$ and $p_2$ have the same colour, and by an appropriate translation-rotation isometry if they are of different colour.) 
\begin{center}
\begin{figure}[ht]
\centering
\includegraphics[width=8cm]{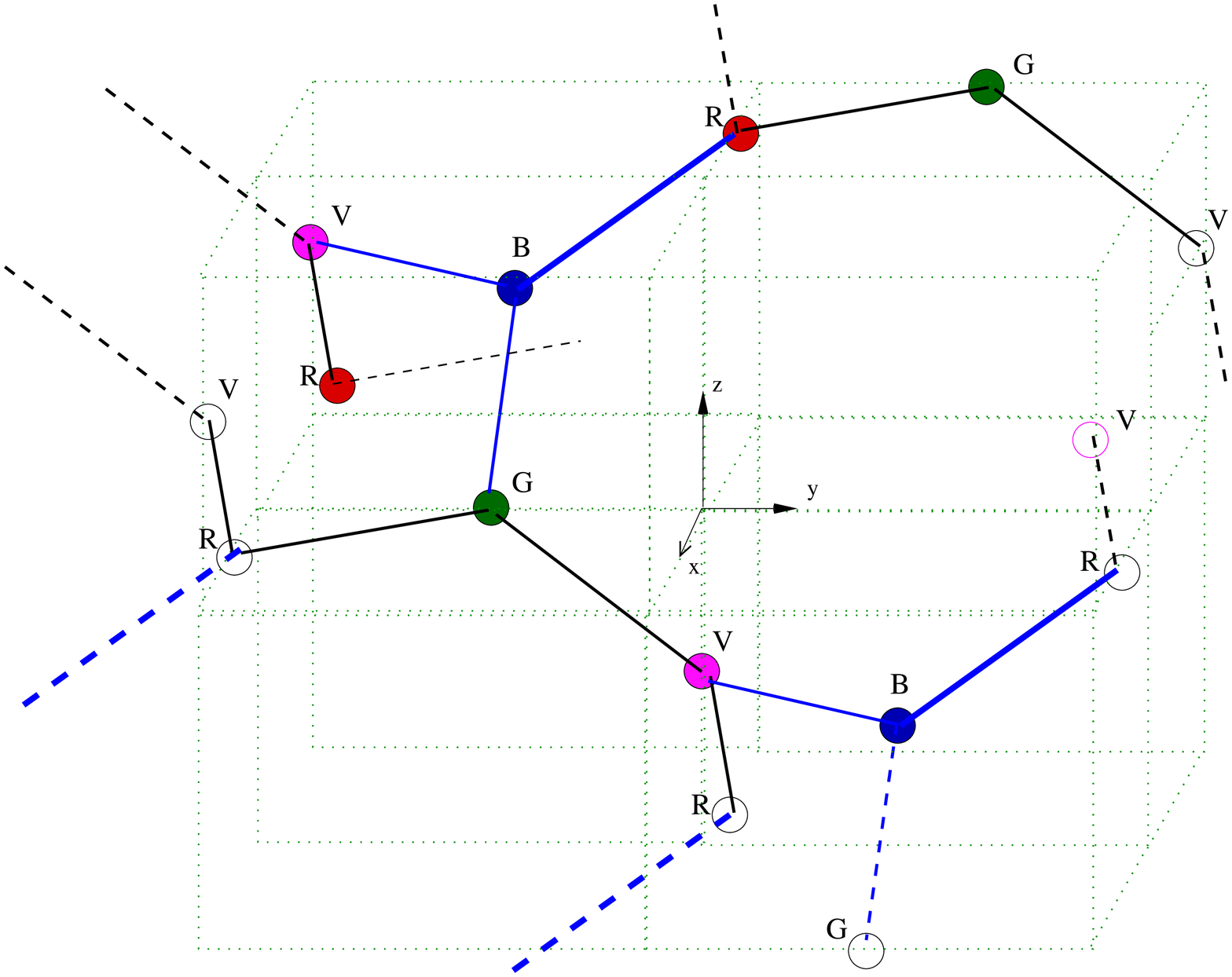}
\caption{The identification of  $\N_{K_4}$ in $\N_{\rm Hxg}$ and $\N_{\rm Fcu}$.}
\label{f:K4cell}
\end{figure}
\end{center}

The construction of $\N_{\rm Hxg}$ above is given by specifying subsets in a particular (nonprimitive) unit cell for the face-centred cubic net. 
In an entirely similar way we now define $\N_{K_4} \subset \N_{\rm Hxg}$ by specifying  limited sets of nodes and edges in a new larger unit cell for $\N_{\rm Hxg}$, with periodicity vectors $(4,0,0), (0,4,0), (0,0,4)$.  The result of this is indicated in Figure \ref{f:K4cell} by the $8$ solid nodes and the $12$  solid internode edges. We start the specification  by including the  blue node, labelled  $B$, with coordinates $(-1,-1,1)$. A choice is then made of one of the two triples of edges incident to this node which provides a triangular ray figure. The three edges each determine, uniquely, such an edge triple for the neighbouring nodes, labelled $G, R$ and $V$. This determination propagates throughout the unit cell $[-2,2]\times [-2,2]\times [-2,2]$ to define $12$ edges. (See Figure  \ref{f:K4cell}.) Moreover, since the node colouring is periodic this "common edge" determination propagates periodically beyond the unit cell and so determines a connected discrete periodic net, which we define to be $\N_{K_4}$.

Note that the subgraphs  determined by two ray figures that share an edge are mutually congruent by translation-rotation isometries, and
that this double ray figure graph is chiral. It follows that $\N_{K_4}$ is also chiral. Also it follows that
$\N_{K_4}$ is invariant under the rotations of $\bR^3$ of order $2$ and of order $3$ that act on a ray figure. Thus condition (iii) in Definition \ref{d:regulardiscretenet} is satisfied.
These rotations permute the $4$ colourings of nodes in a transitive
manner and so condition (iv) also follows. Thus $\N_{K_4}$ is a regular net.

To complete the proof of Theorem \ref{t:regularnets} it remains to show that each of the five regular nets is uniquely determined as a regular net, up to conformal isomorphism, by its vertex figure, and that $\N_{\rm Hxg}$ is also uniquely determined as a transitive  periodic net, with vertex figure a regular hexagon, satisfying conditions (i) and (iv) of Definition \ref{d:regulardiscretenet}.

This uniqueness is elementary when the vertex figure is a square (as with $\N_{\rm Scaff})$, a cube ($\N_{\rm Bcu}$) or an octahedron ($\N_{\bZ^3}$). It remains to consider $\N_{K_4}$, $\N_{\rm Hxg}$ and  $\N_{\rm Dia}$.

Let $\N$ be a periodic transitive connected string-node net with vertex figure equal to a regular triangle and
suppose that the symmetry-rich condition (iii) of Definition \ref{d:regulardiscretenet} holds.  Note that $\N$ is not planar, by the periodicity condition, and so the normals of the ray figures of $\N$ are not parallel. By the condition (iii) the normals through the central nodes of the ray figures are axes for $3$-fold rotations. It follows that $\N$ has at least $2$ distinct axial directions for $3$-fold rotation and since $\N$ is periodic  there exist an orthogonal triple of period vectors
and the rotation axes have directions corresponding to the diagonals of a cube. It now follows that a double ray figure for $\N$ is congruent to a double ray figure for $\N_{K_4}$. It also follows, by the rotational symmetries, that the double ray figures of $\N$ have the same chirality. Thus if we translate-rotate $\N$ in order to identify a single ray figure of $\N$ with a ray figure of $\N_{K_4}$, then $\N$ is either equal to $\N_{K_4}$ or its mirror image. 
The arguments for $\N_{\rm Hxg}$ and  $\N_{\rm Dia}$ are  similar. 
\end{proof}

\subsection{The $K_4$-crystal} Note that the structure of the building block motif in Figure \ref{f:K4cell} implies the dominant helical features that are visible in larger fragments of $\N_{K_4}$. In fact there are two types of  infinite helical paths in the direction of the positive $x$-axis which are determined by the periodic colour sequences 
\[
...(VGRBGVBR)(VGRBGVBR)... \quad \mbox{ and } \quad ... (RVBG)(RVBG)...
\]  
Also, by the rotational symmetries of order $3$, there are similar helical paths in the direction of the $y$-axis and $z$-axis. 

It is also possible to see that the mirror image of $\N_{K_4}$ in the plane $x=0$ shifted $2$ units in the $y$ direction is disjoint from $\N_{K_4}$. These two infinite graphs "map out" the two chambers of a infinite minimal surface discovered by A. H. Schoen. For more mathematical commentary on the $K_4$ net see Coxeter \cite{cox-laves} and Sunada \cite{sun-book}.
Also, see Hyde et al \cite{hyd-et-al} and Schr\"{o}der-Turk et al \cite{sch-et-al} for an intriguing  indication of its occurrences in so-called soft materials in chemistry, and in the wing structure of a butterfly.

\subsection{Quasiregular nets and semiregular nets}  We define a  \emph{quasiregular} string-node net in three dimensions to be a discrete connected {periodic} net which satisfies the conditions (i) to (iv) of Definition \ref{d:regulardiscretenet} but with condition (ii) replaced the requirement that the vertex figure is a \emph{semi-regular} (Archimedean) polyhedron which is not a regular polyhedron. It can be proven, as above, that there is only one such net in three dimension namely $\N_{\rm Fcu}$, with the cuboctahedron as vertex figure. 

We also define a \emph{semiregular string-node net} in the following manner (echoing Delgado-Friedrichs et al \cite{del-et-al-1},\cite{del-et-al-2})
by making reference to a hierarchy of transitivity. Specifically,
\emph{$pqrs$-transitivity} means that there are $p$ types of vertex, $q$ types of edges,  $r$ types of face and $s$ types of tile, up to isometric symmetries. The regular string-node nets in $\bR^3$ may then be defined as the connected (full rank) periodic discrete line segment nets with transitivity type $1111$. Also $\N_{\rm Fcu}$ has type $1112$ and is the unique net of this type, while $\N_{\rm Hxg}$ has type $1121$. 
We define a \emph{semiregular} string-node net to be a  (full rank) periodic discrete net with transitivity type $11rs$ with $r\geq 2$.  This more general mode of definition requires an appropriate definition of faces and tiles which we need not expand on here. In \cite{del-et-al-2} $14$ semiregular periodic nets are identified  which also have the (sphere packing) property that there are no intervertex distances less than the common edge length (in a
maximum-symmetry embedding). See also the detailed discussion in Pellicer and Schulte \cite{pel-sch} and Schulte \cite{schulte}.

\subsection{Terminology and notation}\label{s:chemistry}We note that a "$3$-periodic net" in reticular chemistry refers to a countable graph which underlies a periodic bond-node structure in $\bR^3$. At the same time certain notions for these nets, such as regularity, derive from a reference to a form of "the most symmetrical embedding" in $\bR^3$. In a more mathematical vein, Sunada defines such a graph, with the name \emph{topological crystal}, as an infinite-fold regular covering graph over a finite (not necessarily simple) graph.  Constructions are then considered which yield various standard realisations or equilibrium realisations and so to implicit enumeration  of geometric periodic bond-node structures in $\bR^3$, that is, to mathematical crystals.

On the other hand the periodic string-node nets considered here are more simple-minded primary geometric constructs, logically analogous to material crystal perhaps, and the underlying graphs, in the discrete case, and associated invariants are subsequent considerations. 

The notation we have used for string-node nets and meshes echoes that used for crystallographic bar-joint frameworks in Power \cite{pow-poly}. There we employed a mainly mnemonic notational style with, typically, $\C_{\rm xyz}$ indicating a planar periodic bar-joint framework, and $\C_{Xyx}$, with leading letter upper case, indicating a crystallographic framework in three dimensions. Additionally,  $C_{\rm XYZ}$ indicated a three-dimensional bar-joint framework (eg. $\C_{\rm SOD}$) which is determined in a well-defined way from a zeolite framework with name XYZ (eg. SOD, for the cubic form of sodalite).  The notation  $\N_{\rm Xyz}$ readily allows adornments to indicate derived structures, such as $\N_{\rm Bcu}\otimes \bQ$.

The following summary table, for the discrete periodic line segment
string-node nets we have considered above, includes the correspondence with the chemistry-derived terminology ({\bf srs} etc) that has been given by Delgado-Friedrichs, O'Keeffe and Yaghi.
This represents only the tip of the iceberg with regard to the striking array of periodic nets that are considered by chemists and co-workers who are interested in mapping the $pqrs$-transitivity territory of the uninodal nets that underly material crystals. See in particular the Reticular Chemistry Structure Resource \cite{rcsr} at http://rcsr.net/. In the absence of any compelling mnemonic or mathematical reason it may well be natural to carry over the chemical name {\bf abc} of a net when it gives an unequivocal string-node net, $\N_{\bf abc}$ say, which has  well-defined  node coordinates in $\bR^3$, up to conformal isomorphism.

\medskip

\begin{center}
\begin{tabular}{|c|c|c|c|c|c|}
\hline
$\N$& valency & figure & {\bf abc}   & comment  \\
\hline
\hline
$\N_{\rm hex}$&$3$& regular triangle  & {\bf hcb}& graphene \\
$\N_{\bZ^2}$&$4$& square & {\bf sql}  & 2D grid\\
$\N_{\rm tri}$&$6$& hexagon & {\bf hxl} & triangle grid\\
\hline
$\N_{\rm kag}$&$4$& rectangle & {\bf kgm} & 2D kagome \\
\hline
\hline
$\N_{K_4}$& $3$& reg. triangle& {\bf srs}  & SrSi$_2$\\
$\N_{\rm Scaff}$& $4$& square & {\bf nbo}  &NbO\\
$\N_{\rm Dia}$& $4$& reg. tetrahedron& {\bf dia}  &diamond\\
$\N_{\bZ^3}$& $6$& reg. octahedron & {\bf pcu} & primitive cubic\\
$\N_{\rm Bcu}$& $8$& cube& {\bf bcu} & body centred cubic\\
\hline
$\N_{\rm Fcu}$& $12$& cuboctahedron& {\bf fcu}  &face-centred cubic\\
\hline
$\N_{\rm Hxg}$& $6$& hexagon& {\bf hxg}  & - \\
\hline
\end{tabular}\\
\end{center}
\medskip

\section{Regular  meshes in $\bR^2$ and $\bR^3$}\label{s:linear}
In this section we classify regular line meshes. As we have already observed the line segment meshes which are regular are necessarily linear. The proof makes use of the following theorem which is a consequence of Theorem \ref{t:regularnets}. However there is also a more direct proof since there is no need to consider the nonlinear discrete nets $\N_{K_4}$ and $\N_{\rm Dia}$. 

\begin{thm}\label{t:linearregularnets} Up to conformal isomorphism there are $3$ linear regular string-node nets in three dimensions, namely $\N_{\rm Scaff}, \N_{\rm Bcu}$ and
$\N_{\bZ^3}$.
\end{thm}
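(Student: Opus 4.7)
The plan is to combine Theorem \ref{t:regularnets} with a simple central-symmetry observation. Suppose $\N$ is a linear regular string-node net in $\bR^3$. Since each string through a node $p$ is a \emph{line} (not merely a segment), it contributes two collinear rays at $p$, so for some small $r$ the ray figure $X_r(p)$ is invariant under the antipodal map $v\mapsto -v$. Consequently the vertex figure $F(p)$ (the convex hull of $rX_r(p)$) is a \emph{centrally symmetric} regular polygon or regular polyhedron.

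Next I would enumerate the centrally symmetric regular figures that are compatible with the crystallographic restriction (the restriction already invoked in the proof of Theorem \ref{t:regularnets}, which forbids rotational symmetries of order other than $1,2,3,4,6$). Among regular polygons, only even-sided ones are centrally symmetric, and the crystallographic restriction leaves only the square and the regular hexagon. Among regular polyhedra, the tetrahedron lacks central symmetry while the icosahedron and dodecahedron carry $5$-fold rotational symmetries that violate the crystallographic restriction; only the cube and the octahedron remain. Hence $F(p)$ must be a square, a regular hexagon, a cube or an octahedron.

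Appealing to Theorem \ref{t:regularnets}, the unique regular nets in $\bR^3$ with these vertex figures are $\N_{\rm Scaff}$ (square), $\N_{\rm Bcu}$ (cube) and $\N_{\bZ^3}$ (octahedron), and each is linear either by definition or by direct inspection of its construction. The hexagonal case is excluded because the proof of Theorem \ref{t:regularnets} shows that $\N_{\rm Hxg}$, the unique transitive periodic net with hexagonal vertex figure satisfying (i) and (iv), fails the symmetry condition (iii) of Definition \ref{d:regulardiscretenet} and therefore is not a regular net. This exhausts the possibilities and yields precisely the three listed nets.

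The main obstacle is the hexagonal case: central symmetry alone does not rule it out, so one must invoke the finer analysis from the proof of Theorem \ref{t:regularnets}. If one prefers the direct argument alluded to in the excerpt, one would simply retain the normal-vector/$3$-fold-axis reasoning used there to handle the hexagonal candidate while skipping the portions that construct $\N_{K_4}$ and $\N_{\rm Dia}$, since neither can arise here (their vertex figures, a regular triangle and a regular tetrahedron, are not centrally symmetric).
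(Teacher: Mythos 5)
Your proposal is correct, and it is essentially the route the paper intends, since the paper offers no standalone proof beyond declaring the result "a consequence of Theorem \ref{t:regularnets}" while noting that a more direct argument exists in which $\N_{K_4}$ and $\N_{\rm Dia}$ need not be considered. The difference is one of organisation: the paper's implicit deduction is simply that a linear regular net, being regular, is conformally isomorphic to one of the five nets of Theorem \ref{t:regularnets}, and since conformal isomorphisms preserve linearity and $\N_{K_4}$, $\N_{\rm Dia}$ have no pair of collinear edges at any node, only the three listed nets survive; on that route the hexagon never enters, because it is already absent from the list of five. Your central-symmetry observation instead gives the a priori reason the triangle and tetrahedron vertex figures cannot occur for a linear net, which is precisely what makes the "more direct proof" direct — but, as you correctly flag, it re-opens the hexagonal case (note $\N_{\rm Hxg}$ \emph{is} linear), so you must still import from the proof of Theorem \ref{t:regularnets} both the uniqueness-by-vertex-figure arguments for the square, cube and octahedron and the fact that the unique transitive periodic candidate with hexagonal vertex figure, $\N_{\rm Hxg}$, fails condition (iii) of Definition \ref{d:regulardiscretenet}. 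So your argument buys independence from the constructions of $\N_{K_4}$ and $\N_{\rm Dia}$, at the price of relying on the proof rather than merely the statement of Theorem \ref{t:regularnets}; both versions are sound.
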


Consider the standard realisation of the regular linear net $\N_{\rm tri} = (N,S)$ with a node at the origin and period vectors $a_1= (1,0), a_2= (1/2,\sqrt{3}/2))$.
Let $F$ be a countable additive subgroup of $\bR$ with $1 \in F$ and define $\N_{\rm tri}\otimes F$ to be the  linear mesh whose strings are the translates
\[
\lambda_1a_1+\lambda _2a_2 + l, \quad \lambda_1, \lambda_2 \in F\cap [0,1),\quad l \in S.
\]
Then it follows  that $\N_{\rm tri}\otimes F$ is a regular linear mesh whose vertex figure is a hexagon. 

More generally, let $\N$ be
a regular linear net in $\bR^d$, for $d=2$ or $3$, which is in standard position. Recall that such nets by our definition are discrete and we may assume that the internodal distance are equal to $1$. By a \emph{standard position} for a linear 3D mesh we mean that the origin is a node, the $x$-axis is a string, and that there is a string through the origin in the $x,y$-plane which has minimal angle with the $x$-axis. Then we may define $\N\otimes F$ in the same way as above but in terms of a triple of period vectors.

In two dimensions there is only one other possibility for a regular linear net $\N$, namely $\N_{\bZ^2}$, and one can check that $\N_{\bZ^2}\otimes F= \M_{\rm grid}(F,F)$, a regular linear mesh with vertex figure equal to a square. 
Also it follows that for the regular linear meshes $\N_{\rm tri}\otimes F$ and  $\N_{\bZ^2}\otimes F$ the scaling group is the group of order preserving abelian group automorphisms of $F$ and
hence that the scaling group coincides with $F_*=F\cap \bR_+$ if and only if $F$ is a field.

In three dimensions one obtains similar conclusions for the meshes $\N_{\rm Bcu}\otimes F$ and 
$\N_{\bZ^3}\otimes F$. However, for the scaffolding net $\N_{\rm Scaff}$ and certain choices of
$F$ we may obtain the identity $\N_{\rm Scaff}\otimes F = \N_{\bZ^3}\otimes F$.
The reason for this, roughly speaking, is that if the additive subgroup $F$ contains $1/2$ then the translates of the strings will put back missing strings which distinguish $\N_{\rm Scaff}$ from $\N_{\bZ^3}$. 

Let us say that the additive group $F$ (which contains $1$) is \emph{an even subgroup} if $1/2^k \in F$ for all $k$, and that $F'$ is \emph{an odd subgroup}  otherwise.

We have now identified various regular linear meshes in $\bR^2$ and $\bR^3$ which we denote as follows, where $F$ (resp. $F'$) is an additive countable subgroup (resp. odd subgroup) of $\bR$ containing $1$.
\medskip

\begin{itemize}
\item[]
\noindent $\dim 2$: \quad
$\M_{\rm tri}(F) = \N_{\rm tri}\otimes F,  \M_{\bZ^2}(F)= \N_{\bZ^2}\otimes F$
\medskip

\item[]
\noindent $\dim 3$: \quad
$\M_{\rm Scaff}(F') = \N_{\rm Scaff}\otimes F',
  \M_{Bcu}(F) = \N_{Bcu}\otimes F,  \M_{\bZ^3}(F)= \N_{\bZ^3}\otimes F$
\end{itemize}
\medskip

\begin{thm}\label{t:regularmesh}
Let $\M$ be a regular linear mesh in $\bR^2$ or $\bR^3$. Then $\M$ has one of the forms above, up to conformal isomorphism. Moreover $\M$ is strongly regular if and only if the associated additive group $F$, or $F'$, is a field, in which case the dilation group is $F_*$, or $F'_*$.
\end{thm}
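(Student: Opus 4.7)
The strategy is to show that any regular linear mesh $\M$ factors as $\N \otimes F$ for a uniquely determined regular discrete linear net $\N$ (from Theorem~\ref{t:linearregularnets} in dimension three, and its two-dimensional analogue) and a uniquely determined countable dense additive subgroup $F \subseteq \bR$ containing $1$, and then to read off the strong regularity statement from the multiplicative structure of $F$. First I would place $\M = (N,S)$ in standard position with the origin a node and with unit internodal distance on each string, using the conformal rescaling freedom. The ray figure at the origin is a regular polygon or polyhedron by condition (ii), whose rotational symmetries lie in $\Isom(\M)$ by (iii), and transitivity (i) transports this configuration to every node. Since $\M$ is linear, the string directions at each node must coincide with those of a unique regular linear discrete net $\N \in \{\N_{\rm tri}, \N_{\bZ^2}, \N_{\rm Scaff}, \N_{\rm Bcu}, \N_{\bZ^3}\}$.

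Next, let $F \subseteq \bR$ be the set of $x$-coordinates of nodes on the $x$-axis string; it is dense, countable and contains $0$ and $1$. For $t \in F$, transitivity gives $T \in \Isom(\M)$ sending $0$ to $te_1$, and composing with a point-group rotation fixing $te_1$ I may arrange that $T$ maps the $x$-axis to itself. Since each of the relevant vertex figures (hexagon, square, cube, octahedron) carries a $180^\circ$ symmetry reversing the $x$-direction, both orientations of the restriction of $T$ to the $x$-axis are achievable, whence $F \pm F \subseteq F$ and $F$ is an additive subgroup. Rotational symmetries of the vertex figure then transport $F$ from the $x$-axis to every string through the origin, and iterating translations by $F$ along the period directions of $\N$ shows that the nodes of $\M$ are exactly the $F$-linear combinations of the period vectors of $\N$ and the strings are exactly the $F$-translates of the strings of $\N$, giving $\M = \N \otimes F$. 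When $\N = \N_{\rm Scaff}$ and $1/2^k \in F$ for all $k$, the half-period translates restore precisely the strings whose omission distinguishes $\N_{\rm Scaff}$ from $\N_{\bZ^3}$, so $\M_{\rm Scaff}(F) = \M_{\bZ^3}(F)$; restricting to odd subgroups $F'$ in the scaffolding parametrisation eliminates this redundancy.

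For the second part, any $r \in D(\M)$ sends the node $e_1$ to the node $re_1$ and so lies in $F \cap \bR_+ = F_*$; conversely $r \in F_*$ lies in $D(\M)$ exactly when $rF = F$, which (using $1 \in F$ and $r \in F$) is equivalent to closure of $F_*$ under multiplication, i.e.\ to $F$ being a field. For strong transitivity, any scaled isometry sending a pair of distinct collinear nodes $\{p_1, p_1'\}$ to a second such pair $\{p_2, p_2'\}$ decomposes, after translation and a space-group rotation aligning the two strings, into a pure dilation by the ratio $|p_2 - p_2'|/|p_1 - p_1'| \in F_*/F_*$; the composition lies in $\sIsom(\M)$ for every such pair precisely when $F_*/F_* \subseteq D(\M)$, which is again equivalent to $F$ being a field with $D(\M) = F_*$.

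The main obstacle is the factorisation step of the second paragraph: carefully verifying that the pattern of nodes along every string is exactly $F$ (and not some coset or rotated variant), and that translation by $F$ introduces no spurious strings except in the scaffolding case with $F$ even. The transfer of $F$ between string directions relies sensitively on the rotational-symmetry hypothesis (iii), while the Scaff/$\bZ^3$ collapse must be tracked through a concrete examination of the half-period translates of the six motif strings pictured in Figure~\ref{f:scaff}. Once the factorisation is in place, the characterisation of $D(\M)$ and of strong regularity reduces to routine algebra on the additive and multiplicative structure of $F$.
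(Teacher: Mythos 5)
Your proposal is correct and follows essentially the same route as the paper's proof: extract the discrete regular linear net $\N$ from the vertex figure using conditions (i)--(iii), let $F$ be the node coordinates on the $x$-axis (a countable dense additive subgroup containing $1$), and conclude $\M = \N\otimes F$. Your treatment of strong regularity, via the requirement that the dilation ratios between collinear node pairs lie in the scaling group, is the same idea as the paper's argument that the scaled isometry sending $(0,x_1)$ to $(0,1)$ acts on the $x$-axis as multiplication by $1/x_1$, forcing $F$ to be a field with $D(\M)=F_*$.
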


\begin{proof}
Let $\M$ be a regular linear mesh in $\bR^2$ or $\bR^3$ which is in a standard position. Then the nodes on the linear strings through the origin node have nodes with string distance $1$ from the origin which 
are positioned at the vertices of a regular polygon or polyhedron $\P$ whose centroid is at the origin. By the transitivity condition (i) and the local symmetry condition (iii) of Definition \ref{d:regularlinearmesh} it follows that $\M$ contains as a subnet a discrete regular (linear string-node) net $\N$, in a standard position, whose vertex figure coincides with $\P$. Thus for $d=2$ there are two possibilities for $\P$ and for $d=3$ there are $3$ possibilities.

Let $F$ be the subset of $\bR$ for the  positions of the nodes of $\M$ on the $x$-axis. By the conditions (i) and (iv) of Definition \ref{d:regularlinearmesh} it follows that $F$ is a countable dense additive abelian subgroup of $\bR$ with $1 \in F$. By conditions (i) and (iii) again it follows that $\M$ contains the mesh $\N \otimes F$ as a submesh.
Since this submesh contains all the strings and nodes of $\M$ it coincides with  $\N \otimes F$. 

Suppose that $\M$ is strongly regular. Then in addition to the above
we have the property that if $p_1, p_2$ and $p_3, p_4$ are pairs of nodes, with $p_1, p_2$ and  $p_3, p_4$ distinct, and with each pair on the same string, then there exists a scaled isometry $T$ of $\M$ with $Tp_1=p_3, Tp_2=p_4$.
Let $p_1, p_3$ be the node at the origin and let $p_2$ be on the $x$-axis with coordinate $x_1$, and let $p_4=1\in F$. Since $T$ fixes the $x$-axis and the origin it is a linear map and so the restriction to the $x$-axis  is multiplication by $1/x_1$. Thus $1/x_1=T(1) \in F$ and it follows that $F$ is a field.

On the other hand suppose that $F$ is a field and that
$p_1, p_2$ and  $p_1', p_2'$ are distinct points taken from $2$ strings. Since $\M$ is regular we may assume, in proving the strong regularity condition, that $p_1=p_1'=0$ and that $p_2$ lies on the $x$-axis.
Since $F$ is a field the scaled isometry group of $\M$ contains the group of dilation maps $x \to \alpha x$, where $x \in \bR^d, \alpha\in F_*$.
In particular we may assume that $|p_2|=|p_1-p_2|=|p_1'-p_2'|=|p_2'|=1,$  so that  $p_1$ and $p_2$ are vertices of the vertex figure for the origin. Since $\M$ is regular there is a linear isometry that maps $p_2$ to $p_2'$, and this completes the proof.
\end{proof}

In a similar manner one may define directly meshes based on the kagome net, namely 
\[
\M_{\rm kag}(F')= \N_{\rm kag}\otimes F'
\]
where $F'$ is an odd countable additive subgroup of $\bR$. These are precisely the nets which satisfy the conditions (i), (iii), and (iv), and have the property that the vertex figures is equal to the nonregular rectangular vertex figure of $\N_{\rm kag}$.

We now look more closely at regular meshes for which
$F$ is a unital subgroup of $ \bQ$.

Define  the \emph{supernatural number}, or \emph{generalised integer}, $n$ as the formal infinite product $n = 2^{r_1}3^{r_2}5^{r_3}\dots $, or, equivalently, as the sequence of exponents $r_1, r_2,... $,
with $r_k$ a nonnegative integer or the symbol $\infty$ for each $k$.
Two such numbers $n, m$  are  \textit{finitely equivalent}  if there are positive integers $a, b$ such that there is an equality of the formal products for $an $ and $bm$, in which case we write $an=bm$ to indicate this.

For the supernatural number $n$, as above, let $n_k, k = 1,2,\dots ,$ be any increasing sequence of natural numbers for which  $n_k$ divides $n_{k+1}$, for all $k$, and the multiplicity $r(j,k)$ of the prime divisor $p_j$ of $n_k$ tends to $r_j$ as $k\to \infty$. 
Define the proper linear mesh $\M_{\rm tri}(n)$ as the mesh $(N, S)$ with body equal to
\[
\cup_{k=1}^\infty \frac{1}{n_k}|\N_{\rm tri}|.
\]
This is well-defined in the sense that the set of strings, and hence nodes, is independent of the choice of the sequence $(n_k)$ for $n$.
In fact $\M_{\rm tri}(n)=  \N_{\rm tri}\otimes \bQ(n)$ where $\bQ(n)$ is the abelian group of positive rationals $c/d$ with $d$ a divisor of $n$.
Note that $\bQ(n)$ is a field if and only if $F=\bQ$ (corresponding to $r_k=\infty$ for all $k$),
and that the scaling group is trivial if and only if
$r_k$ is finite for all $k$

The proofs of the following results are now straightforward. 

\begin{thm}\label{p:5stronglyRegular}For $d =2$ and also for $d=3$
there are $2$ strongly regular meshes, up to conformal affine  isomorphism, with dilation group contained in $\bQ$, namely
\[
\N_{\rm tri}\otimes \bQ, \quad  \N_{\bZ^2}\otimes \bQ
\]
and
\[
 \N_{\rm Bcu}\otimes \bQ,\quad  \N_{\bZ^3}\otimes \bQ
\]
\end{thm}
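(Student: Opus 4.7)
The plan is to specialise Theorem \ref{t:regularmesh} to the case in which the dilation group lies in $\bQ$. By that theorem, every strongly regular linear mesh in $\bR^d$ (for $d=2,3$) is conformally isomorphic to some $\N\otimes F$ for one of the linear regular nets $\N$ listed in Theorem \ref{t:linearregularnets}, with strong regularity equivalent to $F$ being a field and with dilation group equal to $F_*$.

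First I would observe that every subfield of $\bR$ contains $\bQ$, so the hypothesis $F_*\subseteq \bQ$ forces $F_*=\bQ_*$ and hence $F=\bQ$. The problem therefore reduces to enumerating the meshes $\N\otimes \bQ$ as $\N$ ranges over the linear regular nets and pruning any conformal coincidences. In dimension two there are exactly the two nets $\N_{\rm tri}$ and $\N_{\bZ^2}$, yielding the two meshes $\N_{\rm tri}\otimes \bQ$ and $\N_{\bZ^2}\otimes \bQ$; these cannot be conformally isomorphic because their vertex figures (regular hexagon and square) are not similar.

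In dimension three Theorem \ref{t:linearregularnets} supplies three candidate nets, $\N_{\rm Scaff},\N_{\rm Bcu},\N_{\bZ^3}$, and the key step---and the one I expect to be the main obstacle---is to establish the identity $\N_{\rm Scaff}\otimes \bQ=\N_{\bZ^3}\otimes \bQ$ foreshadowed in the discussion before Theorem \ref{t:regularmesh}. The inclusion $|\N_{\rm Scaff}\otimes \bQ|\subseteq |\N_{\bZ^3}\otimes \bQ|$ is immediate from $|\N_{\rm Scaff}|\subseteq |\N_{\bZ^3}|$ together with $\bQ$-translation closure on the right. For the reverse inclusion I would use the description of $\N_{\rm Scaff}$ as a periodic linear-string depletion of $\N_{\bZ^3}$ with unit cell of side $2$: the strings of $\N_{\bZ^3}$ that are absent from $\N_{\rm Scaff}$ form precisely a translate of the $\N_{\rm Scaff}$ string pattern by a half-period vector such as $(1/2,0,0)$. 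Since $1/2\in \bQ$, these half-period translates are already present in the construction of $\N_{\rm Scaff}\otimes \bQ$, so its body contains every axis-parallel line through every point of $\bQ^3$, which is the full body of $\N_{\bZ^3}\otimes \bQ$.

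Finally I would verify that the remaining pair $\N_{\rm Bcu}\otimes \bQ$ and $\N_{\bZ^3}\otimes \bQ$ is not collapsed by a conformal isomorphism. Any such map would send vertex figures to similar copies of one another, but a cube and a regular octahedron are not similar (for example, they have different numbers of vertices), so these meshes are genuinely distinct. This leaves exactly two strongly regular meshes with dilation group in $\bQ$ in each of dimensions two and three, as claimed.
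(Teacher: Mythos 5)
Your overall route is the paper's own: Theorem \ref{t:regularmesh} reduces everything to $\N\otimes F$ with $F$ a field, any subfield of $\bR$ whose positive part lies in $\bQ$ must be $\bQ$ itself, and one then runs through the linear regular nets in each dimension, identifies $\N_{\rm Scaff}\otimes\bQ$ with $\N_{\bZ^3}\otimes\bQ$, and separates the survivors by their non-similar vertex figures. The one step that is wrong as stated is your structural description of the depleted strings. With the normalisation of Figure \ref{f:scaff} (period vectors $(2,0,0),(0,2,0),(0,0,2)$; nodes $\bZ^3$ minus the points congruent to $(0,0,0)$ and $(1,1,1)$ mod $2$), the strings of $\N_{\rm Scaff}$ parallel to the $x$-axis are exactly the grid lines whose transverse coordinates $(y,z)$ have opposite parity, and the absent ones are those with equal parity. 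A translation by a half-period along the $x$-axis maps every $x$-parallel line to itself, so it cannot produce the missing $x$-parallel strings; in fact no single translation carries the string set of $\N_{\rm Scaff}$ onto its complement in the grid, since a translation with coordinate parities $(a,b,c)$ takes present lines to missing lines in the three axis directions precisely when $b+c\equiv a+c\equiv a+b\equiv 1\pmod 2$, and summing these congruences gives $0\equiv 1$. So the claim that the missing strings are ``precisely a translate of the $\N_{\rm Scaff}$ string pattern by a half-period vector such as $(1/2,0,0)$'' is false. (Also, $(1/2,0,0)$ mixes normalisations: with internodal distance $1$ the period is $2$, so a half-period is $(1,0,0)$.)

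The repair is immediate and yields exactly what you need: each missing line is a half-period translate, transverse to its own direction, of a present line of the same direction (for instance the missing $x$-parallel lines come from present ones via the translate $(0,1,0)$, i.e.\ $\lambda_2=1/2$), so the union of $|\N_{\rm Scaff}|$ with its translates by $(0,1,0)$ and $(1,0,0)$ already contains $|\N_{\bZ^3}|$; even more simply, every axis-parallel line with rational transverse coordinates is a rational translate of some string of $\N_{\rm Scaff}$ in the same direction. Hence $|\N_{\rm Scaff}\otimes\bQ|=|\N_{\bZ^3}\otimes\bQ|$, and by the net--body correspondence the meshes coincide. With that step corrected, the rest of your argument --- forcing $F=\bQ$, the two-dimensional count, and distinguishing $\N_{\rm Bcu}\otimes\bQ$ from $\N_{\bZ^3}\otimes\bQ$ by the cube versus octahedron vertex figures --- is sound and agrees with what the paper leaves as a ``straightforward'' consequence of Theorem \ref{t:regularmesh} and the remark that translates restore the missing strings whenever $1/2\in F$.
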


\begin{thm}\label{p:triscalingcong}Let $\N$ be a regular string-node net and let $\N\otimes \bQ(n)$ and $\N\otimes \bQ(m)$ be the regular meshes associated with the supernatural numbers $n, m$. Then these meshes are conformally isomorphic if and only if $n$ and $m$ are finitely equivalent. 
\end{thm}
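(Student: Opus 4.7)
The plan is to reduce a putative conformal isomorphism between $\N\otimes\bQ(n)$ and $\N\otimes\bQ(m)$ to an equality $r\bQ(n)=\bQ(m)$ of additive subgroups of $\bR$ for some positive real $r$, and then to analyse that equality prime-by-prime via $p$-adic valuations. The underlying observation is that the scaling $D_r$ sends the body of $\N\otimes F$ to the body of the analogous mesh determined by the translate group $rF$; this is immediate from the definition of $\N\otimes F$ as a union of $F$-translates of the strings of the standard-position net $\N$.

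For the forward direction, suppose $T=D_r\circ U$ is a conformal isomorphism from $\N\otimes\bQ(n)$ onto $\N\otimes\bQ(m)$, with $U$ an isometry of $\bR^d$ and $r>0$. Using transitivity (condition (i) of Definition \ref{d:regularlinearmesh}) I post-compose $T$ with a space group element of $\N\otimes\bQ(m)$ to arrange $T(0)=0$, so that $U(0)=0$ and hence $U$ is linear. Using the local rotational symmetries supplied by condition (iii), I further post-compose with an element of $\Isom(\N\otimes\bQ(m))$ fixing $0$ so that $U$ maps the $x$-axis string to itself. Restricting to the $x$-axis, $T$ acts as $x\mapsto \pm rx$, and since the node sets on the $x$-axis in the two meshes are precisely $\bQ(n)$ and $\bQ(m)$, I conclude $r\bQ(n)=\bQ(m)$. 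As $1\in\bQ(m)$, this forces $r\in\bQ^+$; write $r=a/b$ in lowest terms.

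The analytic heart is then a $p$-adic calculation. Letting $r_p$ and $s_p$ denote the multiplicity of $p$ in $n$ and $m$ respectively, with value $\infty$ allowed, an element $q\in\bQ$ lies in $\bQ(n)$ if and only if $v_p(q)\geq -r_p$ for every prime $p$, with the inequality vacuous when $r_p=\infty$. Consequently $r\bQ(n)=\bQ(m)$ reduces to the pointwise identity $v_p(r)=r_p-s_p$ in $\bZ\cup\{-\infty\}$ for every prime $p$. Substituting $v_p(r)=v_p(a)-v_p(b)$ and rearranging yields $v_p(am)=v_p(bn)$ for every $p$, hence $am=bn$ as supernatural numbers, which is exactly the statement that $n$ and $m$ are finitely equivalent.

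The converse runs the same computation in reverse: given positive integers $a,b$ with $am=bn$, the valuation identity gives $(a/b)\bQ(n)=\bQ(m)$, and then the scaled isometry $D_{a/b}$ produces the required conformal isomorphism. I expect the main obstacle to lie in the normalisation step of the forward direction rather than in the valuation algebra; the reduction to multiplication by $r$ on a single string through the origin hinges on having enough symmetries in the space group of a regular mesh to absorb the rotational and translational freedom of $U$, which is exactly what axioms (i) and (iii) of Definition \ref{d:regularlinearmesh} secure.
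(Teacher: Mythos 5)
Your overall route — normalise a conformal isomorphism so that it fixes the origin and carries the $x$-axis string to itself, read off $r\,\bQ(n)=\bQ(m)$ from the node sets on that string, and then do the prime-by-prime valuation bookkeeping — is certainly the intended one (the paper offers no written argument, declaring the proof straightforward, but its own proof of Theorem \ref{t:regularmesh} identifies $F$ with the node positions on the $x$-axis, exactly as you do). The forward direction is sound: transitivity lets you fix the origin, condition (iii) together with the regular-polytope ray figure gives a rotation in the space group taking the image string back to the $x$-axis, and the valuation computation correctly yields $v_p(a)+s_p=v_p(b)+r_p$ for all $p$, i.e.\ finite equivalence.

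The gap is in the converse, in the sentence you lean on throughout: ``the scaling $D_r$ sends the body of $\N\otimes F$ to the body of the analogous mesh determined by the translate group $rF$; this is immediate from the definition.'' It is not, and as a general statement it is false: from the definition, $D_r$ carries $\bigcup_{\mu}(\mu\cdot a+|\N|)$ (union over $F$-combinations of the period vectors) onto $\bigcup_{\mu}(\mu\cdot a + r|\N|)$ with $\mu$ now ranging over $rF$-combinations — that is, onto the mesh built on the \emph{scaled} net $r\N$, not on $\N$. Identifying this with $|\N\otimes\bQ(m)|$ is exactly where the arithmetic meets the geometry, and it needs the scaling-inclusion facts the paper records: $k|\N|\subseteq|\N|$ for every positive integer $k$ for $\N_{\rm tri},\N_{\bZ^d},\N_{\rm Bcu}$, and for odd $k$ only for $\N_{\rm kag},\N_{\rm Scaff}$ (equivalently, the description $|\N\otimes\bQ(n)|=\bigcup_k\frac{1}{n_k}|\N|$). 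Concretely, writing $r=a/b$ in lowest terms with $v_p(r)=r_p-s_p$, one checks $r|\N|\subseteq\frac{1}{m_j}|\N|$ for large $j$ because $b\mid m_j$ eventually, and $|\N|\subseteq\frac{r}{n_j}|\N|$ because $a\mid n_j$ eventually, so the two unions of translates coincide; for $\N_{\rm Scaff}$ and $\N_{\rm kag}$ one must also note that $am=bn$ with $n,m$ odd forces $v_2(a)=v_2(b)=0$, so the integer ratios $am_j/b$ and $bn_j/a$ are odd and the restricted scaling-inclusion still applies. Without some version of this step the backward implication is unproved (and your ``observation,'' read literally, would fail for a general group $F$ and scale $r$); with it, your argument is complete.
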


Finally, let us define a \emph{pure mesh} in $\bR^d$ to be a strongly regular mesh such that every regular submesh $\M'$ in $\M$ is conformally isomorphic to $\M$. 

\begin{thm}The following statements are equivalent.

(i) $\M$ is a pure mesh in dimension $2$ or $3$.

(ii) $\M$ is conformally affinely isomorphic one of the regular meshes  of Theorem \ref{t:regularmesh} where $F$ (resp $F'$) is equal to $\bQ(n)$, where 
$n = p^\infty$ for some prime $p$ (resp. odd prime $p$).
\end{thm}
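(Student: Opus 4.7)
My strategy is to combine the classification of regular linear meshes from Theorem \ref{t:regularmesh}, which parametrises them as $\N \otimes F$ for a regular discrete net $\N$ and a dense countable additive subgroup $F$ of $\bR$ containing $1$, with the conformal-isomorphism criterion of Theorem \ref{p:triscalingcong}, expressed via finite equivalence of supernatural numbers.

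The first step is to describe the regular submeshes of $\M = \N \otimes F$. I would show that every such submesh has the form $\N \otimes G$ for a dense additive subgroup $G \subseteq F$ containing $1$: the set of string-directions through any node recovers the underlying regular net $\N$, so no other discrete regular net can serve as the base of a regular submesh. When $F = \bQ(n)$ for a supernatural number $n$, the subgroups of $F$ containing $1$ are exactly the groups $\bQ(m)$ for divisors $m \mid n$, and $\bQ(m)$ is dense in $\bR$ iff $r_{p_j}(m) = \infty$ for at least one prime $p_j$.

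For (ii) $\Rightarrow$ (i): taking $F = \bQ(p^\infty)$, the divisors of $p^\infty$ are $p^k$ for $k \in \{0,1,2,\ldots,\infty\}$, and only $k = \infty$ gives a dense subgroup. Hence the only regular submesh of $\M = \N \otimes \bQ(p^\infty)$ is $\M$ itself, and purity is automatic. In the scaffolding case, $p$ odd ensures $\bQ(p^\infty)$ is an odd subgroup as required.

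For (i) $\Rightarrow$ (ii): assume $\M = \N \otimes F$ is pure. I would first reduce to the case $F \subseteq \bQ$ up to scaling: if $F$ contained an irrational $\alpha$, then $\bZ + \alpha\bZ \subseteq F$ would be a rank-$2$ dense subgroup and the associated regular submesh $\N \otimes (\bZ + \alpha\bZ)$ could not be conformally isomorphic to $\N \otimes F$, since a scaled isometry acts on the coordinate subgroup by multiplication by a single real $r$ (composed with an orthogonal symmetry of $\N$) and cannot convert a rank-$2$ lattice-like subgroup into one with the rational divisibility present in $F$. Writing then $F = \bQ(n)$, purity and Theorem \ref{p:triscalingcong} imply that every dense divisor $m \mid n$ is finitely equivalent to $n$. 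If $n$ had two distinct primes $p \neq q$ with $r_p(n) = r_q(n) = \infty$, the divisor $m = p^\infty$ would be a dense divisor whose set of infinite-multiplicity primes is strictly smaller than that of $n$; since finite equivalence $an = bm$ forces the two supernatural numbers to share the same infinite-multiplicity primes, this is a contradiction. Hence $n$ has a unique infinite-multiplicity prime $p$, and any finite-multiplicity primes in $n$ are absorbed by the integers $a, b$ in the definition of finite equivalence, giving $n \sim p^\infty$ and $\M \cong \N \otimes \bQ(p^\infty)$ conformally. For the scaffolding case, the odd-subgroup requirement on $F'$ further forces $p$ to be odd.

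The main obstacle is the rank-reduction step, namely ruling out irrational elements in $F$. This requires careful bookkeeping of how a scaled isometry of $\bR^d$ acts on the additive subgroup of $\bR$ that coordinatises the nodes on a single string, ensuring that rank-$2$ (or higher) dense subgroups of $F$ cannot be transported by such an isometry to rank-$1$ subgroups of the $\bQ(n)$ form, which is the point where one genuinely uses the regularity rather than just the supernatural-number combinatorics.
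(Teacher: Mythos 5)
The paper gives no proof of this theorem (it is left as a ``straightforward'' consequence of Theorem \ref{t:regularmesh} and Theorem \ref{p:triscalingcong}), and your frame --- describe the regular submeshes of $\N\otimes F$, then convert purity into a statement about subgroups of $F$ via finite equivalence of supernatural numbers --- is exactly the intended route. But as written your argument has genuine gaps. The first is your opening claim that every regular submesh of $\N\otimes F$ is $\N\otimes G$ with the \emph{same} base net $\N$. Your justification (``the string-directions through any node recover $\N$'') does not work, because a submesh need not contain all strings of $\M$ through its nodes; its vertex figure can be strictly smaller, so one must explicitly exclude depleted submeshes built on a different base whose strings are parallel to those of $\N$ --- kagome-type depletions of $\N_{\rm tri}\otimes F$ (excluded because the rectangular vertex figure is not regular), layered sub-structures, and scaffold-type depletions of $\N_{\bZ^3}\otimes F$. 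The last case is genuinely delicate: for odd $F'$ the paper itself presents $\N_{\rm Scaff}\otimes F'$ as a regular mesh whose strings all lie among those of $\N_{\bZ^3}\otimes F'$, so this step cannot be waved through, and it is also the place where the ``strongly regular'' clause in the definition of a pure mesh (which your proof never invokes) has to be confronted.

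The remaining gaps are in the two reductions you do carry out. In the supernatural-number step, $\bQ(m)$ is dense in $\bR$ whenever $m$ is not a finite integer, not only when some exponent is infinite; and your final assertion that ``finite-multiplicity primes are absorbed by $a,b$'' is false when infinitely many primes divide $n$. You only exclude the case of two infinite-exponent primes, leaving untreated the cases of no infinite prime (but infinitely many finite ones) and of one infinite prime accompanied by infinitely many finite ones; these are fixable, since $an=bm$ forces the exponents of $n$ and $m$ to agree at all but finitely many primes, but the proof as written does not do it. Finally, the rank-reduction step, which you yourself flag as the main obstacle, is not merely unfinished: the sketch is circular (it presumes $F$ already has $\bQ(n)$-type divisibility) and false as a general principle, since inside $F=\bZ+\sqrt{2}\,\bZ$ the rank-two subgroup $2\bZ+\sqrt{2}\,\bZ$ is carried exactly onto $F$ by the scaling $1/\sqrt{2}$. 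Ruling out irrational elements of $F$ needs a real argument --- for instance, purity forces $F=r(\bZ+\alpha\bZ)$ to be of rank two; for $\alpha$ not quadratic a direct computation shows no scaling maps $\bZ+d\alpha\bZ$ onto $\bZ+\alpha\bZ$ for $d\geq 2$, while for quadratic $\alpha$ one can take a subgroup whose index is a prime inert in $\bQ(\alpha)$, so that no element has the norm required.
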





\subsection{Mesh extensions}We now give a construction scheme to obtain a minimal enlargement of a given string-node net, $\N$ say, to a string-node net $\N_U$ whose dilation group contains a countable subgroup $U$ of $(\bR_+,\times)$.


We refer to any net
$\N'=(N', S')$ in $\bR^d$ with the properties $N\subseteq N'$ and $|\N| \subseteq |\N'|$ as an \emph{extension} of $\N$. This general construction, with $U$ a dense group in $\bR_+$, provides one way of generating meshes from discrete nets. 

The following lemma plays a role in defining the strings of the net $\N_U$.

\begin{lem}\label{l:coverlemma}
Let $\I$ be a countable family of nondegenerate closed intervals in $\bR$. Then there is a family $\C$ of disjoint closed intervals in $\bR$ with the following properties.

(i) $\C$ is a cover of $\I$ in the sense that every interval of $\I$ lies in an interval of $\C$.

(ii) $\C$ is a minimal disjoint cover in the sense that if $\C'$ is a  
cover of $\I$ by a family of disjoint closed intervals, and  $I \subseteq C' \in \C'$ then $C \subseteq C'$ where $C$ is the interval of $\C$ with $I\subseteq C$.
\end{lem}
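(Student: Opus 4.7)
The plan is to construct $\C$ abstractly as the family of closed hulls of classes of a canonical equivalence relation on $\I$, and then verify properties (i) and (ii). Say that an equivalence relation $R$ on $\I$ is \emph{admissible} if for any two distinct $R$-classes $A, B$ the closed sets $\overline{\bigcup A}$ and $\overline{\bigcup B}$ are disjoint, and let $\equiv$ be the intersection of all admissible equivalence relations, viewed as subsets of $\I \times \I$. The family $\F$ of admissible relations is nonempty (the single-class relation $\I \times \I$ is vacuously admissible) and closed under arbitrary intersections: if two distinct classes of $\bigcap \F$ had intersecting closed hulls, these classes would lie in distinct classes of some $R \in \F$ whose closed hulls (being larger) would also intersect, contradicting admissibility of $R$. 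Hence $\equiv$ is a well-defined admissible equivalence relation, and is the smallest such (as a subset of $\I \times \I$). Set
\[
\C = \{\overline{\bigcup [I]_\equiv} : [I] \in \I/{\equiv}\}.
\]

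For (i), disjointness is precisely the admissibility of $\equiv$, and each $I \in \I$ lies in $\overline{\bigcup [I]_\equiv}$. To check that each element of $\C$ is a closed interval, suppose for contradiction that some class $A$ has $\overline{\bigcup A}$ disconnected; then there is a separation $\overline{\bigcup A} = U \sqcup V$ with $U, V$ nonempty closed subsets of $\bR$. Each $J \in A$ is a connected closed interval contained in $U \sqcup V$, so $J \subseteq U$ or $J \subseteq V$; this gives a partition $A = A_U \sqcup A_V$. Both parts are nonempty, since otherwise $\overline{\bigcup A}$ would lie entirely inside one of $U, V$, violating the nontriviality of the separation. Replacing $A$ by the two parts $A_U, A_V$ produces a strictly smaller admissible equivalence relation (the hulls of $A_U, A_V$ lie in the disjoint sets $U, V$, and each is automatically disjoint from the hulls of the unchanged classes), contradicting the minimality of $\equiv$. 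Hence every closed hull is a closed, nondegenerate interval.

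For (ii), let $\C'$ be any disjoint cover of $\I$ by closed intervals, and for $J \in \I$ denote by $C'_J$ the unique element of $\C'$ containing $J$. Define an equivalence relation $R$ on $\I$ by $J\,R\,K$ iff $C'_J = C'_K$. Its classes satisfy $\bigcup [J]_R \subseteq C'_J$, and taking closures yields $\overline{\bigcup [J]_R} \subseteq C'_J$. Distinct $R$-classes lie in distinct and hence disjoint elements of $\C'$, so their closed hulls are disjoint, making $R$ admissible. By the minimality of $\equiv$ we have $\equiv\, \subseteq R$, so for the given $I \in \I$ every member of $[I]_\equiv$ lies in $C'_I$, and therefore $C = \overline{\bigcup [I]_\equiv} \subseteq C'_I = C'$, which is the asserted minimality.

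The main obstacle I anticipate is the verification that each closed hull $\overline{\bigcup [I]_\equiv}$ is a connected subset of $\bR$ (and hence a closed interval rather than just a closed set). The abstract definition of $\equiv$ immediately delivers a disjoint family of closed subsets of $\bR$ covering $\I$ together with the correct minimality property, but the lemma explicitly requires closed intervals; securing this interval property relies on the refinement-by-decomposition argument above, which must carefully combine the minimality of $\equiv$ with the connectedness of each individual member of $\I$.
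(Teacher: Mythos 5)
Your proof is correct, but it takes a genuinely different route from the paper's. The paper works directly on points of $\bR$: it defines $x\sim_\I y$ to mean that for every $\epsilon>0$ there is a finite chain of members of $\I$ joining $x$ to $y$ with all consecutive separations at most $\epsilon$, proves that the equivalence classes are closed and convex, and takes $\C$ to be the nondegenerate classes. You instead work on the family $\I$ itself, taking the finest equivalence relation on $\I$ whose classes have pairwise disjoint closed hulls, and you recover connectedness of the hulls from minimality together with connectedness of the individual intervals. What your route buys is that property (ii) is essentially automatic: any disjoint closed-interval cover $\C'$ induces an admissible partition of $\I$, so the minimal admissible relation refines it and the hull inclusions follow; your verifications of admissibility of the intersection and of the splitting argument are sound. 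By contrast, in the paper's route property (ii) is left implicit, and the $\epsilon$-chain relation can in fact be strictly coarser than the minimal cover: if $\I$ is the family of closures of the complementary gaps of the middle-thirds Cantor set in $[0,1]$, these intervals are already pairwise disjoint, so the minimal cover is $\I$ itself, yet $\epsilon$-chains join any two points of $[0,1]$ for every $\epsilon>0$ and the point-equivalence classes merge everything into the single interval $[0,1]$. What the paper's approach buys is an explicit metric description of the cover elements, which makes the equivariance under the scaling maps used in Theorem \ref{thm:hull} transparent; your $\C$ is characterised only abstractly, though it is still canonical and hence equally well behaved under affine maps. On the minimality point your argument is the more robust of the two.
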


\begin{proof}Note that the intervals of a finite or countable sequence of intervals in $\I$  with consecutively pairwise nonvoid intersections, necessarily  lie in the same interval of any disjoint cover. However, the closures of the unions of maximal sets of intervals which are intersection-connected in this manner may fail to be disjoint. The following coarser equivalence relation leads to the existence of $\C$.

Let $\sep(A,B)$ denote the separation distance between two sets in $\bR$ given by  the infimum of distances $|a-b|$ with $a\in A, b\in B$.
Define the equivalence relation $\sim_\I$ on $\bR$ by the requirement that $x \sim_\I y$  if for every $\epsilon >0$ there exists a finite set of intervals $I_1, \dots , I_N$ from $\I$ so that the separation distances
\[
\sep(\{x\},I_1), \sep (I_1, I_2),\dots , \sep(I_{N-1}, I_N), \sep(I_N,\{y\})
\]
are all no greater  than $\epsilon$. We call such a finite set of intervals in $\I$ an $\epsilon$-chain.

Note that if $x_k\sim_\I y$ and $x_k \to x$ as $k\to \infty$ then $x \sim_\I y$ and it follows that the equivalence classes $[x]$ are closed sets. Also each equivalence class $[x]$ is  convex. For if this were not the case then there exist $y, z$ in $[x]$ with $y<z$ and an intermediate point $w$ which is not in $[x]$ and so, by the closedness of $[x]$,  $w$ is contained in a finite closed  interval $J$ disjoint from $[x]$, of width $\delta>0$ say. On the other hand since $x\sim_\I y$ for each $\epsilon_m = 1/m,$ with $m \in \bZ$ and $1/m < \delta$, there exists a set $J_m$ in an $\epsilon$-chain for $x, y$ with $J_m\cap J\neq \emptyset$. It follows from the compactness of $J$ that there is a subsequence $J_{m_k}$ and a point $w\in J$ such that $\sep(w,J_{m_k}) \to 0$ as $k\to \infty$. Thus $w \in [x]$, a contradiction. The lemma follows by taking $\C$ to be the set of equivalence class $[x]$ which are not singleton sets.
\end{proof}

\begin{thm} \label{thm:hull} Let $\N=(N,S)$ be a line segment net in $\bR^d$ and let $U\subseteq \bR_+$ be a countable scaling group for $\mathbb{R}^d$. Then there exists a uniquely determined extension ${\N}_U$ of $\N$ so that
\begin{itemize}
\item[(i)] $U$ is a subgroup of the scaling group $D({\N}_U)$ of ${\N}_U$.
\item[(ii)] If $\N'$ is an extension of the net $\N$ whose scaling group contains $U$ as a subgroup, then $\N'$ is an extension of ${\N}_U$.
\end{itemize}
\end{thm}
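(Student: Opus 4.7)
The plan is to build $\N_U$ explicitly by forming a candidate body from the $U$-orbit of $|\N|$, using Lemma \ref{l:coverlemma} line-by-line to glue collinear overlapping segments into proper strings, and then verifying both scaling invariance and the universal property by the uniqueness in that lemma.

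First I would set $B = \bigcup_{u\in U} u\cdot|\N|$, which is a countable union of closed line segments, rays or lines in $\bR^d$. For each affine line $L\subseteq\bR^d$ that contains at least one of these segments, fix an isometric parametrisation $L\cong\bR$ and let $\I_L$ be the (countable) family of nondegenerate closed intervals arising as the segments of $B$ lying in $L$. Apply Lemma \ref{l:coverlemma} to obtain a minimal disjoint cover $\C_L$ of $\I_L$ by closed intervals of $\bR$, and push these back to closed segments of $L$. Let $S_U$ be the union, over all such $L$, of the resulting segments, and let $N_U$ consist of the intersection points of pairs of distinct strings from $S_U$. Since strings of $S_U$ that share a line are disjoint by construction, any two intersecting strings of $S_U$ lie on distinct lines, so $\N_U=(N_U,S_U)$ is a well-defined line segment net with body $|\N_U|=B$.

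Next I would verify that $\N_U$ extends $\N$ and that $U\subseteq D(\N_U)$. For any $l\in S$, the interval $l$ lies in $\I_L$ for its supporting line $L$, hence in some member of $\C_L$; this gives $|\N|\subseteq|\N_U|$. If $p\in N$ is the intersection of $l_1,l_2\in S$, then $l_1,l_2$ are non-collinear and lie in strings $l_1^U,l_2^U\in S_U$ supported on the same two distinct lines, so $p\in l_1^U\cap l_2^U\in N_U$; hence $N\subseteq N_U$. For $u\in U$, the group law gives $u\cdot B=B$, and the scaling map $x\mapsto ux$ carries the family $\I_L$ bijectively onto $\I_{uL}$; by the uniqueness part of Lemma \ref{l:coverlemma} it therefore carries $\C_L$ onto $\C_{uL}$, so $u$ permutes $S_U$ and $N_U$, proving $u\in D(\N_U)$.

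For the universal property, suppose $\N'=(N',S')$ is an extension of $\N$ with $U\subseteq D(\N')$. Then $u|\N|\subseteq u|\N'|=|\N'|$ for every $u\in U$, so $B\subseteq|\N'|$. Fix a line $L$. The strings of $\N'$ lying on $L$ are pairwise disjoint closed intervals of $L$ and each member of $\I_L$ lies in one of them, so they form a disjoint cover of $\I_L$ in the sense of Lemma \ref{l:coverlemma}. The minimality clause of that lemma then forces each $C\in\C_L$ to be contained in a single string of $\N'$; hence $|\N_U|\subseteq|\N'|$. Moreover, if $p\in N_U$ is the meeting point of two strings of $S_U$ on distinct lines, the two containing strings of $S'$ are also on those distinct lines and meet precisely at $p$, so $p\in N'$. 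Thus $\N'$ extends $\N_U$. Uniqueness of $\N_U$ is immediate: any other net with the same two properties extends $\N_U$ and is extended by it, hence has the same body, and by the bijection between nets and bodies noted in Section \ref{s:linear} coincides with $\N_U$.

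The only real obstacle is ensuring that the minimal disjoint cover is intrinsically defined on each line, so that scaling by elements of $U$ interchanges these covers compatibly and so that the minimality in Lemma \ref{l:coverlemma} can be invoked against an arbitrary extension $\N'$; this is exactly what the equivalence-relation construction in the lemma's proof supplies, so once the line-by-line reduction is set up, both (i) and (ii) follow formally.
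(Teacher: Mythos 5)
Your proposal is essentially the paper's own proof: the same construction (take the $U$-orbit of the strings, merge collinear overlapping segments line by line via the minimal disjoint cover of Lemma \ref{l:coverlemma}, take nodes to be the intersection points) and the same verification of (i) via equivariance/uniqueness of the minimal covers and of (ii) via the minimality clause, with your treatment of (ii) in fact spelled out a little more fully than the paper's "clear by construction". The one incidental inaccuracy is the claim $|\N_U|=B$: a minimal-cover interval can strictly contain the union of the orbit segments it covers (it may absorb limit points or gaps approached by $\epsilon$-chains), but nothing in your argument actually relies on this equality, so the proof stands.
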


\begin{proof}  We first construct the set  $\overline{S}$ of strings of ${\N}_U$. Let $S'=\{d(s_i)|\, s_i\in S, d\in U\}$ be the countable set of all images of strings of $\N$ under the scaling maps in $U$. (There is no need to distinguish repetitions.)
To define the strings in $\overline{S}$ we merge certain colinear elements of $S'$ into single strings in the following manner. For a fixed straight line $g$ let $S'_g$ be the subset of $S'$ of elements that are subsets of  $g$. Define $\overline{S}_g$ to be the set of intervals in the minimal disjoint cover of ${S'}_g$ provided by Lemma \ref{l:coverlemma}. In particular this set of strings has the following properties.

\begin{itemize}
\item[(i)] Each element of $\overline{S}_g$ is a (closed) finite or semi-infinite interval or equal to $g$.
\item[(ii)] Every element of $S'_g$ is contained in one of the sets in $\overline{S}_g$.
\item[(iii)] The intersection of any two distinct subsets in $\overline{S}_g$ is empty.
\end{itemize}

Finally we define $\overline{S}$ to be the union of the sets $\overline{S}_g$ as $g$ runs over the countable set of lines which contain an element of $S'$.

Having constructed the strings we define the set $\ol{N}$ of nodes of ${\N}_U$ to be the set of intersection points of the strings in $\ol{S}$. 

Note that $\ol{N}$ is invariant under the action of $U$ and that $\ol{N}$ contains the orbit $\O_U(N)$ of the set $N$ of nodes of $\N$ under the action of $U$;
\begin{equation}\label{e:nodeinclusion}
\ol{N} \supseteq \O_U(N) = \bigcup_{d\in U}d(N) 
\end{equation}
It is possible that non-parallel strings in $\overline{S}$ intersect at points which do not belong to this orbit in which case the inclusion is proper.

We now show that ${\N}_U=(\overline{S}, \overline{P})$ is a net which satisfies the properties (i) and (ii) in the statement of Theorem~\ref{thm:hull}.

By construction, it is clear that ${\N}_U$ is a string-node  net and that ${\N}_U$ is an extension of $\N$. By construction, it is also clear that ${\N}_U$ satisfies the minimality condition (ii) of Theorem~\ref{thm:hull}. It remains to show that every element of $U$ is an element of the scaling group of ${\N}_U$.

Let $g_1$ and $g_2$ be two distinct straight lines with $g_2=d(g_1)$ for some $d\in U$. Since $U$ is a group it follows that $S'\cap g_1 = d(S'\cap g_2)$. Also $d$ is a bi-Lipschitz map  and so it follows that $d$ effects a bijection between the disjoint closed covers of these sets of intervals. Thus $d$ induces a bijection $\ol{S} \to \ol{S}$ and hence a bijection $\ol{N} \to \ol{N}$, as required.
\end{proof}

In fact the construction above also  applies if $U$ is any countable set of affine transformations.

The construction of an extension of a linear discrete net by a countable scaling group is more straightforward in that minimal disjoint covers are not required. Interesting linear meshes  of this type in two dimensions may be obtained as the projections of regular linear meshes in $\bR^3$  where the projections of a triple of periodicty vectors have incommensurate lengths. In such examples we note that the inclusion of Equation (\ref{e:nodeinclusion}) is proper.

\begin{rem}We comment briefly on some categorical aspects of dense meshes.

Recall first that an infinite bar-joint framework $\G= (G,p)$ can be viewed as a particular  placement or realisation of the infinite underlying graph $G$, sometimes referred to as the \emph{structure graph} of $\G$ or, in reticular chemistry, as the underlying \emph{topology} of $G$.  Similarly, a discrete string-node net $\N=(N,S)$ has an evident underlying structure graph.

On the other hand there is no discrete structure graph which underlies a string-node mesh $\M$. The appropriate associated "pre-metric" structure can be considered to be the topological space $X(\M)=(|\M|,\tau)$  where the topology $\tau$ has a base of open sets provided by the sets $\gamma([a,b])$
in $\bR^d$ arising from parametrisations of the strings and parameter values $a<b$. In fact, although, for simplicity, we have chosen to define strings as nonoverlapping subsets, one could equally well (from the perspective of rigidity and flexibility, for example) allow strings to overlap, and indeed, admit all sets $\gamma([a,b])$ between nodes as strings. 
With this perspective the topological space $X(\M)$  underlying the mesh $\M$ supports a Borel measure, defined by the "placement" $\M$, namely the \emph{string length
measure} $\mu_\M$, determined by the strings in the canonical way. 

We note that the mesh $\M=(N,S)$ also gives rise naturally to a metric space $(|\M|,d)$.
The metric $d$ is given by
\[
d(x,y) = \inf (d_{\gamma_1}(x_1,x_2) +d_{\gamma_2}(x_2,x_3)+\dots +
d_{\gamma_r}(x_{r-1},x_r))
\]
where the infimum is taken over all sets of points 
$x=x_1,x_2,\dots ,x_{r}=y$,
where consecutive points lie on the same strings, $L_1,\dots ,L_{r-1}$ respectively, 
and where $d_{\gamma_i}(\cdot, \cdot )$ is  the string distance separation for
points on the same string $L_i$. 
The metric space $(|\M|, d)$, the topological space $(|\M|, \tau)$
and the Borel measure space $(|\M|, \mu_\M)$ can be regarded as "internal to the mesh" in the sense that their various isomorphism classes do not change under the deformation motions we consider below.
\end{rem}

\section{Flexibility and rigidity}\label{s:flexibility}

We now consider dynamical aspects of meshes, in the sense of their continuous motions and smooth motions which preserve all the internodal distances as measured along each string. Since the nodes of a mesh that lie on a particular string $l$ are dense in $l$ this means that the position (or placement) of a string at a time value $t$ is given by a length preserving map from $l$ to $\bR^d$. 
The terms \emph{flex} and \emph{motion} are used interchangeably. Also there will be no need in the following discussion to consider infinitesimal flexes or general velocity fields.
 
\begin{defn} \label{d:placement}Let $\M=(N,S)$ be a line segment mesh in  $\bR^{d}$.

(i) A \emph{placement} (resp. $C_k$-\emph{smooth placement}) of $\M$  is 
an injective map  
$p : |\M|\to \bR^d$ such that the restriction of $p$ to each string is continuous (resp. $C_k$-smooth) and is length preserving.

(ii) A \emph{continuous flex} of  $\M$ is a pointwise continuous path $t \to p_t, t\in [0,1]$, of placements of $\M$.

(iii) A \emph{$C_k$-smooth flex} of  $\M$ is a pointwise continuous path $t \to p_t, t\in [0,1]$, of
$C_k$-smooth placements of $\M$.

(iv) A \emph{uniformly $C_k$-smooth} flex of $\M$ is a  $C_k$-smooth flex $t \to p_t, t\in [0,1]$,
such that the associated map from $|\M|\times [0,1]$ to $\bR^d$ 
is a $k$-fold continuously differentiable function when restricted to the set $(|\M|^-)^o\times [0,1]$.
\end{defn}

Note that we assume that placements are defined by injective maps
and in particular these are \emph{noncrossing} placements in the sense that the placed strings cannot intersect at nonnodal points.
Also, unlike the usual freedom adopted for bar-joint frameworks, a continuous flex in the sense above admits no "collisions". Thus a continuous flex of a string-node net or mesh may be viewed as a homotopy in the ambient space,  as in the category of knots.

We have defined continuous flexes and smooth flexes of a line segment mesh without direct reference to smooth meshes. One may also regard such a smooth flex as a smooth path in the space of smooth meshes. However we need not consider  this level of abstraction.

\subsection{Planar mesh motions}
Consider first the planar bounded grid mesh $(\N_{\bZ^2}\otimes \bQ)\cap [0,1]^2$, which we now denote more simply as $\M_q$.
We say that a map $f:[0,1]^2\to \bR^2$ is a \emph{laminar map} if it has the form
\[
f(s,t) = \alpha_1(s)+\alpha_2(t)
\]
where the sum is a vector sum for the maps $\alpha_1, \alpha_2: [0,1]\to \bR^2$.
A \emph{laminar placement} of $\M_q$ is a placement $p:|\M_q| \to \bR^2$ which is the restriction of a laminar map.

Note that a laminar map is string-length preserving for $\M_q$
if the maps $\alpha_1, \alpha_2$ are length preserving.
Simple examples show that a  laminar map need not be injective and so need not define a placement. However,  we have the following sufficient condition.

\begin{lem}
Let $\alpha_1, \alpha_2$ be continuously differentiable and length preserving and suppose that the unit vectors $\alpha_1'(s), \alpha_2'(t)$ satisfy the angle constraints
\[
\alpha_1'(s)\cdot (1,0) > 1/\sqrt{2},\quad  \alpha_2'(t)\cdot (0,1) >1/\sqrt{2},\quad s, t \in (0,1).
\]
Then the associate laminar map $f:[0,1]^2\to \bR^2$ determines a $C_1$-smooth placement of $\M_q$.
\end{lem}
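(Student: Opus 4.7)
The plan has three ingredients corresponding to Definition~\ref{d:placement}(i) at $k=1$: injectivity on $|\M_q|$, and $C_1$-smoothness and length preservation of the restriction of $f$ to each string. Of these, only injectivity requires real work. The body $|\M_q|$ consists of the horizontal and vertical rational segments $[0,1]\times\{t_0\}$ and $\{s_0\}\times[0,1]$; on a horizontal string the restriction $s\mapsto \alpha_1(s)+\alpha_2(t_0)$ is a Euclidean translate of $\alpha_1$, hence inherits its $C_1$-smoothness and length preservation, and the vertical strings are handled symmetrically. So it suffices to show that $f$ itself is injective on $[0,1]^2$.

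For this I would suppose $f(s_1,t_1)=f(s_2,t_2)$ and rearrange to
\[
\alpha_1(s_1)-\alpha_1(s_2)\;=\;\alpha_2(t_2)-\alpha_2(t_1).
\]
Length preservation forces $|\alpha_i'|\equiv 1$, and this combined with the angle hypotheses supplies the \emph{complementary} strict bounds $|\alpha_1'(u)\cdot(0,1)|<1/\sqrt 2$ and $|\alpha_2'(v)\cdot(1,0)|<1/\sqrt 2$ on $(0,1)$. Taking inner product of the displayed equation with $(1,0)$ and with $(0,1)$ converts it into two integral identities between $\alpha_1'$ over $[s_2,s_1]$ and $\alpha_2'$ over $[t_1,t_2]$.

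Assume for contradiction that $s_1>s_2$. The $x$-identity, using the strict lower bound on $\alpha_1'\cdot(1,0)$ (valid on $(0,1)$, with exceptional set confined to $\{0,1\}$ and hence of measure zero), forces
\[
(s_1-s_2)/\sqrt 2\;<\;\int_{s_2}^{s_1}\alpha_1'(u)\cdot(1,0)\,du\;=\;\int_{t_1}^{t_2}\alpha_2'(v)\cdot(1,0)\,dv\;\le\;|t_2-t_1|/\sqrt 2,
\]
so $s_1-s_2<|t_2-t_1|$. The symmetric $y$-identity uses the strict bound on $\alpha_2'\cdot(0,1)$ and the complementary strict bound on $\alpha_1'\cdot(0,1)$ to yield $|t_2-t_1|<s_1-s_2$, a contradiction. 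Hence $s_1=s_2$, and the displayed equation then gives $\alpha_2(t_1)=\alpha_2(t_2)$; strict monotonicity of the $y$-component of $\alpha_2$ (a consequence of $\alpha_2'\cdot(0,1)>1/\sqrt 2>0$) then forces $t_1=t_2$.

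The expected obstacle is purely technical: the angle hypothesis is strict only on the \emph{open} interval $(0,1)$, so converting the pointwise strict bounds into strict integral bounds requires an appeal to continuity of $\alpha_i'$ together with the observation that the potential equality set in the integrand is confined to the endpoints and has measure zero. Once this is dispatched, the component-wise comparison closes the argument cleanly, and the estimate seems rigid enough that there is no obvious shorter route.
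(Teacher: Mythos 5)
Your proof is correct and follows essentially the same route as the paper: both reduce the lemma to injectivity of $f$ on $[0,1]^2$, rearrange $f(s_1,t_1)=f(s_2,t_2)$ into $\alpha_1(s_1)-\alpha_1(s_2)=\alpha_2(t_2)-\alpha_2(t_1)$, and use $|\alpha_i'|\equiv 1$ together with the $1/\sqrt{2}$ bounds to see that increments of $\alpha_1$ are $x$-dominant while increments of $\alpha_2$ are $y$-dominant, which forces $s_1=s_2$ and $t_1=t_2$. Your componentwise integral comparison is just the parametrised form of the paper's open-cone phrasing, with a slightly more careful handling of the strictness being confined to the open interval.
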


\begin{proof}
Let $\alpha_i(t)=(\alpha_{i,x}(t), \alpha_{i,y}(t))$ for $i=1,2$. By the hypotheses we have
\[
\alpha_{1,x}'(s)> \frac{1}{\sqrt{2}} > \alpha_{1,y}'(s)
\] 
and so $\alpha_{1,x}(s)> \alpha_{1,y}(s)$ for $s\in (0,1)$. It follows that for any $0\leq s_1<s_2$ the vector $\alpha_1(s_2)-\alpha_1(s_1)$ lies in the open  cone $y >|x|$.

It follows, similarly, that for $0<t_1<t_2$ the vector $\alpha_2(t_2)-\alpha_2(t_1)$ and its negative lie in the open cone
$|y|<|x|$.

Suppose now that $f(s_1,t_1)=f(s_2, t_2)$ with $s_1\leq s_2$ and $t_1\neq t_2$. Then
$\alpha_1(s_2)-\alpha_1(s_1) = -(\alpha_2(t_2)-\alpha_2(t_1))$ which is a contradiction.
\end{proof}

\begin{defn}
A continuous flex $\gamma=(\gamma_t)$ of the  grid mesh $\M_q$ is a \emph{laminar flex} if each placement map $\gamma_t$ is laminar. 
\end{defn}

From the lemma it follows that there are diverse uniformly smooth flexes of $\M_q$ which are of laminar type and which are determined by the motions of the two boundary strings
that lie in the $x$-axis and the $y$-axis.
The next theorem shows that  uniformly smooth flexes are necessarily of this form for a nonzero time interval.

\begin{lem}\label{l:laminar}
Let $p:[0,1]^2\to \bR^2$ be a continuous map  with the following properties.

(i) $p$ is differentiable on $(0,1)^2$,

(ii) $p$ preserves the lengths of all vertical and horizontal line segments of the form $\{r\}\times [a,b]$ and $ [a,b]\times \{r\}$ with $r\in \bQ$, 

(iii) the vectors
$\frac{\partial p}{\partial x}(x,y)$, $\frac{\partial p}{\partial y}(x,y)$ are
not collinear for all $(x,y)\in [0,1]^2$.


Then $p(x,y)$ has the laminar  form $p(x,y)= \alpha_1(x)+\alpha_2(y)$.
\end{lem}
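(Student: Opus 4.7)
The plan is to exploit the length-preservation hypothesis to pin down the magnitudes of the partial derivatives of $p$ and then use the non-collinearity in (iii) to force the mixed partial $\partial_y\partial_x p$ to vanish identically, which gives the laminar form.

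First I would establish that $|\partial_x p(x,y)|\equiv 1$ and $|\partial_y p(x,y)|\equiv 1$ on $(0,1)^2$. For each rational $r$, hypothesis (ii) says the curve $x\mapsto p(x,r)$ preserves the length of every subinterval $[a,b]\times\{r\}$, so it is arc-length parametrized; hence $|\partial_x p(x,r)|=1$ for all $x\in(0,1)$ and every rational $r$. A density argument, using continuity of $p$ together with the underlying regularity of $p$ (which for a uniformly smooth placement delivers continuous partial derivatives), extends this identity from rational $r$ to every $y\in(0,1)$. The symmetric argument gives $|\partial_y p(x,y)|\equiv 1$.

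Second, I would differentiate the identities $\partial_x p\cdot\partial_x p=1$ in $y$ and $\partial_y p\cdot\partial_y p=1$ in $x$, yielding
\[
\partial_x p\cdot \partial_y\partial_x p=0, \qquad \partial_y p\cdot \partial_x\partial_y p=0.
\]
Equality of mixed partials (valid once the regularity is at least $C^2$) then shows that the single vector $\partial_y\partial_x p=\partial_x\partial_y p$ is orthogonal to both $\partial_x p(x,y)$ and $\partial_y p(x,y)$. By hypothesis (iii) these two vectors are linearly independent in $\bR^2$, so the only vector orthogonal to each is zero, giving $\partial_y\partial_x p\equiv 0$ on $(0,1)^2$. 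Consequently $\partial_x p(x,y)=u(x)$ depends only on $x$ and $\partial_y p(x,y)=v(y)$ depends only on $y$. Setting
\[
\alpha_1(x)=\int_0^x u(s)\,ds,\qquad \alpha_2(y)=p(0,0)+\int_0^y v(t)\,dt,
\]
and integrating along, say, the horizontal path from $(0,y)$ to $(x,y)$ followed by the vertical path from $(0,0)$ to $(0,y)$, one recovers $p(x,y)=\alpha_1(x)+\alpha_2(y)$; the equality extends to the closed square $[0,1]^2$ by continuity of $p$.

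The main obstacle is the regularity bookkeeping. Mere pointwise differentiability of $p$ does not automatically give continuous first partials, let alone mixed second partials, which the argument above uses freely. The intended reading appears to be that $p$ is (at least) uniformly $C^2$-smooth, in line with the notion of uniformly $C_k$-smooth flex introduced in Definition~\ref{d:placement}, and under that assumption the steps above are justified as written. If one wishes to work strictly at the stated level of differentiability, one can replace step two by a first-order argument: the Jacobian $T_p(x,y)$, whose columns $\partial_x p$ and $\partial_y p$ are unit vectors by step one, has each column determined (by length preservation along the appropriate coordinate strings) independently of the transverse variable, and the non-collinearity in (iii) ensures no compensating cross-terms can arise; the integration step then proceeds as before.
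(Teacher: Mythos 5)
Your argument is essentially identical to the paper's own proof: both differentiate the unit-speed identities $|\partial_x p|=|\partial_y p|=1$ in the transverse variable, use non-collinearity (iii) together with equality of mixed partials to force $\partial_x\partial_y p\equiv 0$, and then integrate to obtain the laminar form. The regularity caveats you raise (extending unit speed from rational lines to all lines, existence and equality of mixed partials) are also glossed over in the paper's proof, which in effect relies on the uniform smoothness available in its application, so you have not introduced any gap beyond what the original argument already assumes.
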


\begin{proof}
Let $p(s,t)= (f(s,t),g(s,t))$. Note that for fixed $s$ and $t$ the line segment from $(s,t)$ to $(s,t+\delta t)$ has length $|\delta t|$.
It follows from the assumptions of string-length preservation
that the derivative with respect to $t$
of the vector valued function $t \to p(s,t)$ at $(s,t)$ is a vector of magnitude $1$. Thus
\[
(\frac{\partial}{\partial t}f(s,t))^2 +(\frac{\partial}{\partial t}g(s,t))^2 =1
\]
Differentiating with respect to $s$, we obtain
\[
2{(\frac{\partial}{\partial t}p(s,t))\cdot (\frac{\partial}{\partial s}\frac{\partial}{\partial t}p(s,t)) } =0
\]
for all $s, t$ in the open unit square.


By the hypotheses it also follows that there is a similar equation with the roles of $s$ and $t$ exchanged, namely
\[
2(\frac{\partial}{\partial s}p(s,t))\cdot (\frac{\partial}{\partial t}\frac{\partial}{\partial s}p(s,t)) =0.
\]

Since, by (iii), the tangent vectors for the two strings through $(x,y)$
are not colinear we deduce that
\[
\frac{\partial}{\partial x}\frac{\partial}{\partial y}p(x,y) =0
\]
throughout the open unit square.
Since $p$ is differentiable here it follows, on repeated integration, that $p(x,y)$ is laminar on this open set, and  the desired conclusion follows. 
\end{proof}

\begin{thm}\label{t:laminar} Let  $p = (p_t)$ be a
uniformly $C_1$-smooth flex of $\M_q$. Then there is $\delta >0$ such that $p_t$ is laminar for $0\leq t\leq \delta$.
\end{thm}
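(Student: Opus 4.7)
The plan is to apply Lemma~\ref{l:laminar} to each placement $p_t$ for $t$ in a suitably small interval $[0,\delta]$, adopting the convention that $p_0$ is the identity placement of $\M_q$. Hypothesis~(i) of the lemma (differentiability on $(0,1)^2$) is immediate from the uniform $C_1$-smoothness of the flex, and hypothesis~(ii) (length preservation along the rational horizontal and vertical segments $\{r\}\times[a,b]$ and $[a,b]\times\{r\}$ with $r\in\bQ$) is built into the definition of a flex of $\M_q$. The entire argument therefore reduces to verifying the non-collinearity condition~(iii) for the partials $\partial_x p_t$ and $\partial_y p_t$ at each point of $(0,1)^2$.

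At $t=0$ the identity placement gives $\partial_x p_0\equiv(1,0)$ and $\partial_y p_0\equiv(0,1)$, so the signed-area function
\[
D(x,y,t):=\det\bigl(\partial_x p_t(x,y),\,\partial_y p_t(x,y)\bigr)
\]
equals $1$ identically on $(0,1)^2\times\{0\}$. Length preservation along the dense horizontal and vertical strings of $\M_q$ forces $\partial_x p_t$ and $\partial_y p_t$ to be unit vectors at every interior point, so $D$ takes values in $[-1,1]$, and the joint continuity of the partials implicit in uniform $C_1$-smoothness makes $D$ continuous on $(0,1)^2\times[0,1]$.

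The hard part will be upgrading the identity $D\equiv 1$ at $t=0$ into a uniform lower bound of the form $D\geq 1/2$ on the strip $(0,1)^2\times[0,\delta]$, since $(0,1)^2$ is not compact and joint continuity alone does not yield a single $\delta$ valid at every interior point. To handle this, I would use that the unit-vector partials $\partial_x p_t$ and $\partial_y p_t$ are bounded and, together with the $C_1$-regularity of the four boundary strings of $\M_q$ (the sides of $[0,1]^2$, which are themselves strings carried by the flex), extend continuously to the compact product $[0,1]^2\times[0,1]$. On this compact set the continuous function $D$ equals $1$ throughout $\{t=0\}$, so a standard uniform-continuity argument yields $\delta>0$ with $D(x,y,t)>1/2$ for all $(x,y)\in[0,1]^2$ and $t\in[0,\delta]$. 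In particular condition~(iii) of Lemma~\ref{l:laminar} holds on $(0,1)^2$ for every such $t$.

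Applying Lemma~\ref{l:laminar} to each $p_t$ with $t\in[0,\delta]$ then produces the laminar decomposition $p_t(x,y)=\alpha_1^t(x)+\alpha_2^t(y)$, as required.
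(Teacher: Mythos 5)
Your argument is essentially the paper's own proof: at $t=0$ the identity placement makes the partials orthogonal, uniform $C_1$-smoothness then yields a $\delta>0$ on which the partials $\partial_x p_t$ and $\partial_y p_t$ remain non-collinear throughout the square, and Lemma~\ref{l:laminar} finishes the job. Your determinant function $D$ and the compactness/uniform-continuity discussion simply spell out the uniformity step that the paper compresses into the single sentence ``since the flex is uniformly smooth it follows that there exists $\delta>0$\dots'', so this is the same route, just with that implicit point made explicit.
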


\begin{proof}
At time $t=0$ the placement $p_t$ is the identity map and so the tangential vectors
$\frac{\partial}{\partial x}p_t(x,y), \frac{\partial}{\partial y}p_t(x,y)$ are orthogonal for all $(x,y)$ in the unit square $[0,1]^2$. Since the flex is uniformly smooth it follows that there exists $\delta >0$ such that for all $0\leq t\leq\delta$ the tangential vectors
$\frac{\partial}{\partial x}p_t(x,y), \frac{\partial}{\partial y}p_t(x,y)$ are not colinear for all $(x,y)$ in $[0,1]^2$.  Thus, by the lemma, all the placements for such $t$ are laminar.
\end{proof}

We consider next the triadic kagome mesh. 
Recall that the infinite periodic kagome bar-joint framework admits diverse continuous motions which are periodic in some sense. 
See \cite{kap-tre-tho-gue}, \cite{owe-pow-nyjm-1} for example. We now show that in contrast to the rectangular case above, this flexibility does not carry over in any way to the triadic kagome mesh.


\begin{thm}\label{t:triadicrigid}
The triadic kagome mesh is rigid with respect to uniformly $C_1$-smooth flexes.
\end{thm}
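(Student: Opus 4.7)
The plan is to exploit the three distinct string directions of $\M_{\rm kag}$, in contrast with the two directions of the grid mesh in Theorem~\ref{t:laminar}, to upgrade the conclusion from mere laminarity to full isometric rigidity.

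First, the body $|\M_{\rm kag}|=\bigcup_{n\ge 1}3^{-n}|\N_{\rm kag}|$ is dense in $\bR^2$, since the strings of $\N_{\rm kag}$ in each of its three directions occur at comparable spacings and the $3^{-n}$-scalings refine these spacings to $0$. Consequently $(|\M_{\rm kag}|^-)^o=\bR^2$, and uniform $C_1$-smoothness of the flex $p_t$ delivers a continuous family of $C^1$ maps $P_t:\bR^2\to\bR^2$, with $P_0=\mathrm{id}$.

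Let $e_1=(1,0)$, $e_2=(1/2,\sqrt{3}/2)$, $e_3=(-1/2,\sqrt{3}/2)$ be unit vectors along the three string directions, and note that $e_1+e_3=e_2$. Length-preservation along each string of direction $e_i$ yields $|D_{e_i}P_t|=1$ on the dense set of such strings, hence on all of $\bR^2$ by continuity of $DP_t$. Linearity of the differential gives $D_{e_1}P_t+D_{e_3}P_t=D_{e_2}P_t$, so at each $(x,y)$ the three unit vectors $v_i=D_{e_i}P_t(x,y)$ satisfy $v_1+v_3=v_2$. Expanding $|v_1+v_3|^2=1$ forces $v_1\cdot v_3=-1/2=e_1\cdot e_3$, so the linear map $DP_t(x,y)$ sends the basis $(e_1,e_3)$ to a pair $(v_1,v_3)$ with matching pairwise inner products. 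Hence $DP_t(x,y)\in O(2)$ at every point.

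A $C^1$ map $P_t:\bR^2\to\bR^2$ with derivative in $O(2)$ everywhere must be a Euclidean isometry. In the concrete spirit of the proof of Lemma~\ref{l:laminar}, I would differentiate each identity $|D_{e_i}P_t|^2=1$ in the directions $e_j$ with $j\neq i$, use equality of mixed second partials together with the linear independence of $v_i,v_j$ at each point to obtain $D_{e_i}D_{e_j}P_t=0$, and then expand in the standard basis to conclude $\partial_x^2 P_t=\partial_x\partial_y P_t=\partial_y^2 P_t=0$. Thus $P_t$ is affine, and the $O(2)$-condition makes it a rigid motion of $\bR^2$. Since $t\mapsto P_t$ is a continuous path of isometries with $P_0=\mathrm{id}$, the flex $p_t$ is just the restriction of a continuous path of Euclidean isometries to $|\M_{\rm kag}|$, i.e.\ trivial.

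The main obstacle is the passage from the pointwise condition $DP_t(x,y)\in O(2)$ to global affineness, which formally calls on the second partial derivatives of $P_t$ that are only implicitly available from $C_1$-smoothness. This technical point is already present (and tacitly handled) in the proof of Lemma~\ref{l:laminar}; alternatively, one may invoke the classical rigidity result that a $C^1$ map between open subsets of $\bR^2$ whose derivative lies everywhere in $O(2)$ is the restriction of a Euclidean isometry.
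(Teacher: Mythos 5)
Your argument is correct, but it takes a genuinely different route from the paper's. The paper's proof has two ingredients: first, an infinitesimal triangle-inequality argument, using the equilateral cross-strings of the containing triadic triangular mesh, shows that any $C_1$-smooth placement preserves the angles between the tangent vectors of the strings at each node (each of the six consecutive angles is at most $\pi/3$ and they sum to $2\pi$); second, uniform smoothness is used to apply the argument of Lemma \ref{l:laminar} to a skew grid submesh, giving a laminar form for small $t$, and laminarity together with angle preservation at nodes forces rigidity. You instead exploit uniform smoothness from the outset: density of the mesh makes the flex a family of $C^1$ maps $P_t$ of the plane, length preservation along the three dense families of parallel strings gives $|D_{e_i}P_t|=1$ everywhere, and the linear dependence $e_1+e_3=e_2$ among the unit direction vectors pins the Gram matrix of $(DP_te_1,DP_te_3)$, so $DP_t\in O(2)$ at every point; classical rigidity of $C^1$ maps with orthogonal differential then finishes. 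Your route is shorter, needs no small-time restriction or passage to the triangular mesh, and isolates exactly why three directions with a linear relation rigidify where the two directions of the grid mesh do not; what it forgoes is the node-angle (subconformality) argument, which uses only $C_1$-smoothness of the individual strings (no planar extension) and which the paper reuses in its discussion of the Sierpinski mesh. One caution: your suggested imitation of Lemma \ref{l:laminar} via mixed second partials is not available at $C^1$ regularity (a wrinkle already present in that lemma), so the last step should indeed rest on the classical result --- for instance, the inverse function theorem makes $P_t$ a local metric isometry, hence locally affine, hence a rigid motion of $\bR^2$ --- which is perfectly adequate.
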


\begin{proof}
We first show that any $C_1$-smooth placement of 
$\M_{\rm kag}$ is congruent to the identity placement of $\M_{\rm kag}$. 
Note that such a placement, $p_t$ say, has a
unique continuous extension of $p_t: \bR^2\to \bR^2$. Moreover, a restriction of this extension defines a string-length preserving placement of the containing triadic triangular mesh, $\M_{\rm tri}$. It suffices then to show that the $C_1$-smooth  mesh $\M = p_t(\M_{\rm tri})$, defined by the range of this  placement, is congruent to $\M_{\rm tri}$.

Let  $0<t<1$ and let $q_0$ be a node of the mesh $\M$.  By the  $C_1$-smoothness of the strings of $\M$ there are $3$ three unit tangent vectors to the three  strings through $q_0$, say $a_1, a_2, a_3$. We may choose these vectors so that the vectors $a_1, a_2, a_3, -a_1, -a_2, -a_3$ correspond to a consecutive sequence of points on the unit circle, with the angles between $a_1, a_2$ and between $a_2, a_3$ acute angles. 
We claim that all the consecutive angles are equal to $\pi/3$. To see this it is sufficient to note that no consecutive angle is greater than $\pi/3$. We now  show that this follows from the string-length preservation property applied to cross-strings connecting nodes on adjacent strings through $q_0$. 

Let $q_0'$ be the node in $\M_{\rm tri}$ which is the preimage of $q_0$ under $p_t$, and let $q_1'$ and $q_2'$ be nodes on adjacent strings through $q_0'$ which lie on a common cross string.
The internodal string lengths agree with the Euclidean distances, $\|q_1' - q_2'\|, \|q_1'-q_0'\|$ and $\|q_2'-q_0'\|$ respectively, and these lengths are equal. 
Let $q_1, q_2$ be the images of $q_1', q_2'$ under $p_t$. Since $p_t$ preserves string lengths we have $d_{01}=d_{12}=d_{02}$ where $d_{ij}$ is the string distance between $q_i$ and $q_j$ for the (unique) string $l_{ij}$ of $\M$ through $q_i, q_j$.

Suppose now, by way of contradiction, that angle between the tangent vectors at $q_0$ for the strings $l_{01}$ and $l_{02}$ is greater than $\pi/3$. Then 
\medskip

(i) the ratio $\|q_1-q_2\|/\|q_1-q_0\|$ has a finite  limit  greater than $1$ as $q_1$ tends to $q_0$.
\medskip

Also, by string-length preservation,
\medskip

(ii) the ratio $d_{01}/\|q_1-q_0\|$ has limit $1$, as $q_1$ tends to $q_0$,
\medskip

and, by the triangle inequality,
\medskip

(iii)   $d_{12}$  is not less than $\|q_1-q_2\|$.
\medskip

\medskip

Combining the above it follows that if  $q_1$ is sufficiently close to $q_0$ then
\[
\frac{d_{12}}{d_{01}} = 
\frac{d_{12}}{\|q_1-q_2\|}
\frac{\|q_1-q_2\|}{\|q_1-q_0\|}
\frac{\|q_1-q_0\|}{d_{01}} >1
\]
which is a contradiction.

To see that $\M_{\rm tri}$ is rigid for uniformly smooth flexes we may note that $\M_{\rm tri}$ contains the $4$-valent mesh $\M$ whose strings are either parallel to the $x$-axis
or at angle $\pi/3$ to this axis. This is simply a skew form of the grid mesh and the argument of Lemma \ref{l:laminar} applies to show that a uniformly smooth flex of $\M$ is laminar for small $t$. However, it is straightforward to show that a $C_1$-smooth laminar placement of $\M$ which is angle preserving at the nodes is necessarily a rigid placement and so the completion of the proof follows.

\end{proof}

We next show that there are simple inductive constructions that lead to dense line segment meshes $\M$ in the plane which are highly flexible.

Let us say that a line segment mesh $\M$ is \emph{locally flexible} if there exists a bounded open set $U$ in $\bR^2$ and a path of homeomorphisms $p_t$ of $\bR^2$, for $t\in [0,1]$, such that
$p=(p_t)$ is a nontrivial continuous (string-length preserving) flex of $\M$ with $p_t(x,y)=(x,y)$ for all points $(x,y)$ in the complement of $U$.

\medskip

In fact there is sufficient freedom in the construction of such meshes below that we can arrange that each of the placements $p_t$, when restricted to any line segment string, is the restriction of an isometry. Another way of expressing this is to introduce the class of \emph{rod-pin meshes}. Let us define a rod-pin mesh somewhat informally as
a line segment string-node mesh for which the strings are inflexible rods. As with finite rod-pin frameworks, rods may be joined at their endpoints or at interior points and so  there are $3$ forms of joining nodes for a pair of rods: a double end point node, a single end point node, or a  double interior point node. A flex of a rod-pin mesh is then a continuous flex of the associated line segment string-node mesh 
such that the restriction to each string  is isometric.

\begin{thm}\label{t:locallyflexiblemesh} Let $r\geq 3$ be a positive integer. Then there exists a locally flexible dense rod-pin mesh in $\bR^2$ such that the degree of every node is $r$.
\end{thm}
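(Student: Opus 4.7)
Fix an open ball $V$ with $\overline{V} \subset U$. The plan is to assemble $\M$ from three pieces: (a) a finite rod-pin mechanism $\B$ inside $V$ which carries the flex, (b) a dense rigid rod-pin mesh $\M_{\mathrm{out}}$ in $\bR^2 \setminus V$, and (c) a dense countable collection of small fixed rigid rod-pin patches filling $V$ away from the motion envelope of $\B$. The flex $p_t$ will act by rigid motions on the moving bars of $\B$ (and their rigid attachments) and as the identity on everything else.

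Step 1 (seed mechanism). Place inside $V$ a finite rod-pin framework $\B$ admitting a nontrivial one-parameter flex $q_t$; for example, a planar four-bar linkage with two opposite vertices pinned at antipodal points of $\partial V$ (kept fixed by $q_t$) and the other two interior vertices free to trace circular arcs inside $V$. The flex acts by a rigid motion on each bar, and extends to a continuous family of piecewise isometric homeomorphisms of $\bR^2$ equal to the identity outside $V$. To raise each internal node of $\B$ to degree exactly $r$, attach the required extra rods rigidly to the moving bar containing that node, so each such rod is isometrically moved by $q_t$ together with its parent bar.

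Step 2 (dense rigid exterior). In $\bR^2 \setminus V$ build a dense rod-pin mesh $\M_{\mathrm{out}}$ with no rod entering $V$ and with every node of degree exactly $r$. Starting from an appropriate regular discrete linear net (a $3$-, $4$- or $6$-valent one, depending on $r$) and applying Theorem~\ref{thm:hull} with a countable dense scaling group yields a dense extension; truncating any rod that would enter $V$ and locally adjusting valencies by inserting short $r$-fold-symmetric auxiliary rods brings every node to degree $r$. The anchor nodes of $\B$ on $\partial V$ are identified with nodes of $\M_{\mathrm{out}}$, their incident rods from both sides being arranged to sum to $r$.

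Step 3 (density inside $V$ and verification). Let $K \subset V$ denote the compact motion envelope swept by $\B$ and its rigid attachments over $t \in [0,1]$. At a countable dense subset of $V \setminus K$ place pairwise disjoint rigid rod-pin patches of small diameter, each with all nodes of degree $r$ and no rod crossing into $K$; these patches are fixed by $p_t$. The union $\M$ of $\B$ and its attachments, $\M_{\mathrm{out}}$, and the patches is then a dense rod-pin mesh in $\bR^2$ in which every node has degree $r$, and $p_t$ is a continuous flex of $\M$ supported in $V \subset U$. The main obstacle is injectivity of $p_t$: one must guarantee that $K$ is disjoint from every fixed rod of $\M$ and that no two moving rods collide during the motion. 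This is addressed by first shrinking the amplitude of $q_t$ so that $K$ is a small compact subset of $V$, and only then densely filling $V \setminus K$ with fixed patches that avoid $K$, so that moving and fixed rods meet only at the prescribed nodes of $\M$.
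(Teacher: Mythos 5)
There is a genuine gap, and it lies at the heart of the problem. Your architecture is ``one finite moving mechanism $\B$ plus a fixed dense remainder'', with the moving parts confined to a compact envelope $K$ that you deliberately keep free of fixed rods. But then the resulting structure is not a dense mesh: the interior of $K$ (which is nonempty for any nontrivial motion of a pinned linkage, since moving bars sweep two--dimensional regions) contains only the finitely many rods of $\B$ and its attachments, so the nodes are not dense there, contradicting the requirement that the mesh be dense in $\bR^2$. Conversely, if you did fill $K$ densely with fixed material, injectivity would fail: at any time $t>0$ a displaced rod of $\B$ occupies a position that meets the dense fixed part at points which are not common nodes, so two distinct points of the body are mapped to the same point of the plane, violating the noncrossing (placement) condition. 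In a dense mesh every neighbourhood of a moving rod contains other mesh material, so no flex can move only a finite subframework while keeping a dense complement pointwise fixed; this is exactly the obstruction your ``shrink the amplitude and avoid $K$'' step cannot circumvent. (The degree-$r$ bookkeeping via ad hoc auxiliary rods has a similar, though lesser, problem: each inserted rod creates new nodes whose degrees must again be corrected, so one needs a systematic exhaustion, not a one-shot local fix.)

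The paper's proof avoids this by making the \emph{entire} dense bounded mesh move coherently rather than isolating the motion in a finite mechanism. One starts from a flexible $4$-bar boundary quadrilateral and builds the mesh inside it by an inductive string-interpolation: at each step a rod-pin path with sufficiently many joints is added inside a complementary region, chosen floppy enough that the already-constructed flex extends over it while it stays inside its region without collisions (requirement (a) of the proof); region diameters and areas are driven to zero to obtain density (b); a joint-exhaustion rule brings every node to degree exactly $r$ (c); and midpoints of long rods are revisited so nodes become dense on every string (d). The flex is thus carried by the whole dense structure and fixes the boundary, which is what the noncrossing condition forces. If you want to salvage your approach, you would have to replace ``fixed rigid patches filling $V\setminus K$'' by material that itself deforms along with $\B$, which is precisely the inductive extension-of-the-flex construction of the paper.
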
 

\begin{proof}
Consider a cyclic rod-pin framework $\G_0= (G_0, p)$ with $4$ bars and $4$ joints as in the left hand side diagram of Figure \ref{f:stretchy}.
We construct a bounded rod-pin mesh $\M$, such that (i) $\partial |\M|=|\G_0|$, (ii) the closure of $|\M|$ is $|\G_0|$ together with the interior region, and
(iii) $\M$ has a nontrivial  boundary fixed continuous flex.

\begin{center}
\begin{figure}[ht]
\centering
\includegraphics[width=8cm]{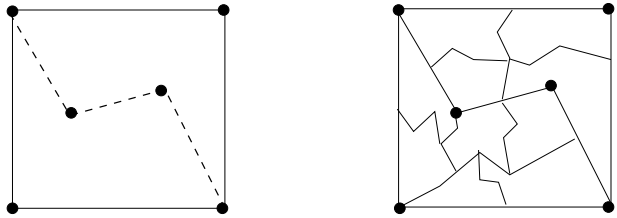}
\caption{Constructing a locally flexible rod-pin mesh.}
\label{f:stretchy}
\end{figure}
\end{center}

Step $n= 1$: Add a rod-pin path framework (dotted in the figure) which lies in the bounded region interior to the boundary and has the following properties. The added path of rods has its end joints  pinned at $2$ joints on  the boundary, there are at least $2$ other joints, and there is a nonconstant continuous flex $p(t)$ of the combined rod-pin framework which fixes the $4$ boundary bars. A boundary joint for this path may be an existing joint for $\G_0$ (as illustrated) or may be a new joint which is interior to one of the rods on the boundary. Let $p(t), t\in [0,1]$, be any nontrivial flex of the resulting framework $\G_1$ which fixes the outer boundary and for which there are no self intersections of nodes and joints.

Steps $n = 2, 3, \dots $ : Repeat Step 1 for the two open regions, to obtain frameworks $\G_3 \supset \G_2 \supset \G_1$, after which
(Steps $n = 4, 5, \dots $) repeat such divisions for the resulting regions, and so on, ad infinitum, according to the following requirements.

(a) The flex $p(t), t\in [0,1]$, of $\G_{n-1}$  extends to a flex of $\G_n$, also denoted $p(t)$, of the augmented framework. Realising this property is elementary since the added rod-pin path can possess an arbitrary finite number of nodes and we are only required to obtain a motion of the augmented path which can remain interior to the appropriate region of the complement of $(G_{n-1}, p(t))$ at time $t$.

(b) The maximum of the diameters of the regions at Step $n$, and the maximum of the areas of the regions at Step $n$, tend to zero as $n$ tends to infinity. This can be assured by adding a rod-pin path to reduce a maximum diameter, or reduce a maximum area, when $n$ is divisible by $3$.

(c) The following \emph{joint exhaustion process} is adopted. The joints introduced at Step $n$ are labelled as additions to a sequence $v_1, \dots ,v_{m_n}$. 
If the degree of $v_i$ is equal to $r$ then this joint is not revisited, meaning that this joint is not used as an end point for a subsequent rod-pin path. If, on the other hand, $v_j$ is the first joint with degree less than $r$, and if $n$ is congruent to $1$ mod $3$,  then this joint is revisited.

(d) If $n$ is congruent to $2$ mod $3$ then one of the end joints of the rod-pin path is chosen to be the midpoint of a pre-existing rod with maximum length.

This process of construction defines a rod-pin mesh together with a continuous flex $p(t)$ which fixes the boundary and the theorem follows from this.
\end{proof}

\begin{rem}
We remark that one can be more liberal in the definition of a general string-node net and admit continuous strings
which are nonrectifiable, as in the case of fractal curves. In the extreme case such meshes may be \emph{uniformly nonrectifiable} in the sense that for each of the strings $\gamma: [0,1]\to \bR^2$ the curves $\gamma([a,b])$ are nonrectifiable for all $a<b$.
One can readily construct such meshes by a process of \emph{string interpolation}, analogous to that in the proof above.
Alternatively we note that interesting divisible uniformly nonrectifiable meshes arise from tilings by \emph{nonrectifiable rep-tiles}. By the latter we mean a compact set  $K$ with interior which may be tiled by congruent copies of $rK$ for some $r<1$, and  where the boundary is a non-rectifiable closed curve. The twin dragon is a well known example. See for example Vince \cite{vin}.
Note  that there are no longer any string length constraints, so for vacuous reasons such meshes, including the twin dragon mesh, are locally flexible.
\end{rem}

\section{Further directions}
We indicate a number of areas for further development.

\subsection{Block meshes} Figure \ref{f:block} is indicative of the rational grid mesh $\M$ in the unit cube, that is the three dimensional version of the square grid of Theorem \ref{t:laminar}. It is evident that a literal variant of this theorem does not hold  for this block mesh. Indeed,
the corresponding laminarity condition, defined in terms of a three-fold vector sum, might be referred to as \emph{strong laminarity}. To see how this may fail for all small time intervals note first that a bounded 2D mesh in a plane in $\bR^3$ admits smooth ruled surface motions analogous to that of a sheet of paper. Evidently one can repeat such motions in a parallel manner to obtain smooth (string-length-preserving) flexes of $\M$ that are not strongly laminar. It seems plausible that for small enough time intervals a smooth flex will agree with such a motion.

\begin{center}
\begin{figure}[ht]
\centering
\includegraphics[width=4cm]{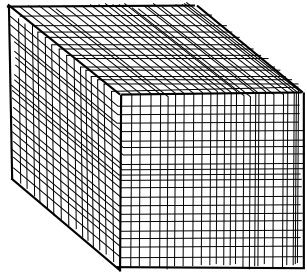}
\caption{Some strings of the block mesh.} 
\label{f:block}
\end{figure}
\end{center}

The noncrossing requirement in our definition of continuous and smooth flexes, which ensures proper nonintersecting motions, presents some interesting general configuration space questions in the case of flexible bounded meshes. Recalling that a bounded mesh $ \M$ is necessarily connected we may define the \emph{minimum diameter} $d_{\rm min}(\M)$ to be the infimum of the diameters of the placements $p(|\M|)$ of $\M$ occurring in any continuous flex. In a similar way one can define the \emph{maximum diameter} $d_{\rm max}(\M)$, which is generally  easier to compute. What are the minimum diameters for the block mesh and the grid mesh $\M_q$? Evidently  the block mesh has minimum diameter of at most $\sqrt{2}$ in view of elementary vertical concertina motions which collapse the height while keeping the base fixed.  

\subsection{The Sierpinski mesh} The Sierpinski triangle mesh $\M_{\rm Sier}$ is defined to be the line segment mesh whose strings are the line segments that appear in the construction of the Sierpinski triangle. 
See Figure \ref{f:sier}. 
With the exception of the three extremal nodes the nodes have degree $4$ and have the same local geometry, up to rotation. Indeed each such node has a cycle of  $4$ adjacent  angles between the incident strings, namely
$\pi/3, \pi /3, \pi/3$ and $\pi$, with only the first and third of these being infinitesimally triangulated, in the sense given in the proof of Theorem \ref{t:triadicrigid}.
It follows that for any placement one obtains only a partial subconformality condition, namely that the two corresponding angles are no greater than $\pi/3$. Thus further argument is required to show that the $C_1$-smooth placements of $\M_{\rm Sier}$ are trivial, if indeed this is the case. We leave as open problems the determination of whether the Sierpinski mesh is smoothly flexible or continuously flexible.

\begin{center}
\begin{figure}[ht]
\centering
\includegraphics[width=4cm]{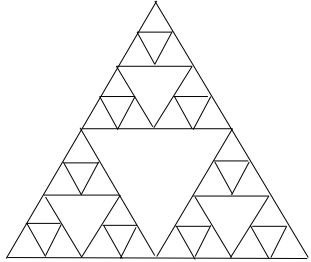}
\caption{Some strings of the Sierpinski mesh $\M_{\rm Sier}$.}
\label{f:sier}
\end{figure}
\end{center}

One can similarly construct fractal meshes based on iterated function systems where once again string-length preserving proper motions seem to be trivial.
We also remark that one can construct dense line-segment meshes with self-similarity properties by "filing in" tilings of the plane of quasicrystallographic type defined by  partition and expansion rules associated with a finite set of tiles.
In such a case it is usually possible to define an associated \emph{substitution rule mesh} by also carrying out the construction in an inward manner. In these inward substitutions one may use the same subdivision rule. 
In this way one may naturally define  the \emph{pinwheel mesh} for example.

\subsection{Semidiscrete structures} In an interesting 2007 thesis, E. B. Ashton \cite{ash} introduced the subject of \emph{continuous tensegrities} and analysed various examples and their infinitesimal and continuous rigidity. 
Recall, informally, that a finite tensegrity  can be viewed as a modified bar-joint framework $(G,p)$ in which some of the bars are replaced by strings (cables) and some of the bars are replaced by struts (incompressible bars that admit stretching). In the case of continuous tensegrities the finiteness of the graph is relaxed and continua of strings and bars are possible, as in the ensuing example.
The methodologies used for these constraint systems have a number of commonalities with our discussions. However,  the consistent focus in \cite{ash} is to obtain generalisations of the Roth-Whitely theorem \cite{rot-whi} to the effect that a sufficient condition for infinitesimal rigidity is the existence of a positive self-stress together with the rigidity of the underlying framework $(G,p)$.

The following illustrative example appears in Chapter 4 of \cite{ash}. Let the points of the unit circle about the origin in the plane provides a set of joints. Let the antipodal joints be connected by struts and define (line segment) strings to connect two joints that are separated by a subarc of length $h/\pi$ units. If the subarc
distance is irrational, then the continuous tensegrity is a single
connected tensegrity which is infinitesimally rigid.

Some methodological commonalities can also be found in the detailed analysis of the flexibility of semidiscrete frameworks and joined ribbons given by Karpenkov \cite{kar}. In this setting a single ribbon is in effect a string-node net with a continuum of rigid strings (rods) connecting corresponding points on two continuous strings (being the boundary of the ribbon). Karpenkov has 
obtained the striking result that generic double  ribbons have one-dimensional configuration spaces and curious continuous motions. 

The rigidity and flexibility analysis we have given above can be viewed as the beginnings of what we see as a novel and interesting topic of geometric rigidity. As well as the fundamental problem of determining the forms of rigidity and flexibility of various meshes in dimensions $2$ and $3$, there are also natural further directions inspired in part by the discrete setting, such as the analysis of
infinitesimal flexibility (Badri et al \cite{bad-et-al}),
the implications of symmetry (Ross et al \cite{ros-et-al}), and the implications of boundary conditions (Theran et al \cite{the-et-al}).

\end{document}